 \newtheorem{thm}{Theorem}[section]
 \newtheorem{cor}[thm]{Corollary}
 \newtheorem{lem}[thm]{Lemma}
 \newtheorem{prop}[thm]{Proposition}
\theoremstyle{remark}
 \newtheorem{rem}[thm]{Remark}
 \numberwithin{equation}{section}
\begin{document}

\title[Considerations  on the subgroup commutativity degree...]
 {Considerations on the subgroup commutativity degree and related notions}



\author[F. G. Russo]{Francesco G. Russo}
\address{Dipartimento di Matematica e Informatica, Universit\'a di Palermo, Via Archirafi 34, 90123, Palermo, Italy.
} \email{francescog.russo@yahoo.com}

\subjclass[2010]{Primary: 20D60, 20P05; Secondary: 20D08.}

\keywords{Subgroup commutativity degree, permutable subgroups, centralizers, subgroup lattices.}

\date{\today}

\begin{abstract}
The concept of subgroup commutativity degree of a finite group $G$ is arising interest in several areas of group theory in the last years, since it  gives a measure of the probability that a randomly picked pair $(H,K)$ of subgroups of $G$ satisfies the condition $HK=KH$.  In this paper, a stronger notion is studied and relations with the commutativity degree are found.     
\end{abstract}

\maketitle

\section{Introduction}
In the present paper we deal only with finite group, even if there is a recent interest to the subject in the context of infinite groups \cite{aor, erfanian-rezaei, er, hr, rezaei-russo3}.  The \textit{commutativity degree} of  a group  $G$, given by
\begin{equation}d(G) = \frac{|\{(x,y) \in G \times G \ | \ [x,y] = 1\}|}{|G|^2}=\frac{1}{|G|^2} \sum_{x \in G} |\{y \in G  \ | \ y^{-1}xy = x\}|\end{equation}
\[ = \frac{1}{|G|^2} \sum_{x \in G} |C_G(x)|,\]
was studied extensively in \cite{ar, bM06, cms,  elr, err, guralnick, lescot1, lescot2, salemkar, nr1, nr2, nrr, rusin} an generalized in various ways. Its importance is testified in the theory of the groups of prime power orders in \cite[Chapter 2]{b1}, where it is called \textit{measure of commutativity} by Y. Berkovich in order to emphasize the fact that it really gives a measure of how far is the group from being abelian. In \cite{das-nath1,das-nath2,elr} it was introduced the following variation,
\begin{equation}d(H,K) = \frac{|\{(h,k) \in H \times K \ | \ [h,k] = 1\}|}{|H||K|} = \frac{1}{|H||K|} \underset{h \in H} {\sum} |C_K(h)|,\end{equation}
where $H$ and $K$ are two arbitrary subgroups of $G$. Of course,  $d(G,G) = d(G)$, whenever $H=K=G$, and, consequently, the bounds known in literature for $d(G)$ may be sharpened by  examining $d(H,K)$.  In recent years, there is an increasing interest in studying the problem from the point of view of the lattice theory (see \cite{f1,f2,f3,russo1,russo2}). 
T\v{a}rn\v{a}uceanu \cite{mt, mtbis} has introduced the  \textit{subgroup commutativity degree} of a finite group, that is, the ratio
 \begin{equation}sd(G) = \frac{|\{(H,K) \in \mathcal{L}(G) \times \mathcal{L}(G) \ | \ HK = KH\}|}{|\mathcal{L}(G)|^2},\end{equation}
where $\mathcal{L}(G)$ denote the subgroup lattice of $G$. It turns out that 
\begin{equation}sd(G) = \frac{1}{|\mathcal{L}(G)|^2} \underset{H \in \mathcal{L}(G)}{\sum}|\mathcal{C}(H)|,\end{equation}
where \begin{equation}\mathcal{C}(H) = \{K \in \mathcal{L}(G) \ | \ HK =  KH\}.\end{equation} Variations on this theme have been considered in \cite{barman, f1, f2, f3, or, russo1, russo2}, involving weaker notions of permutability among subgroups. 

Of course, if $[H, K] = 1$, then $HK = KH$, where $[H, K] = \langle [h, k] \ | \ h \in H, k \in K  \rangle$. Conversely, $HK = KH$ does not imply that $[H, K] = 1$. In fact, the equality among the sets $\{hk \ | \ h \in H, k \in K\}$ and $\{kh  \ | \ k\in K, h \in H\}$ does not imply, in general, that all the elements of $H$ permute with all elements of $K$. Many examples can be given. Therefore it is meaningful to  define the following ratio
\begin{equation}
ssd(G) = \frac{|\{(H, K) \in \mathcal{L}(G)^2 \ | \ [H,K] = 1\}|}{|\mathcal{L}(G)|^2},
\end{equation}
which we will call \textit{strong subgroup commutativity degree} of $G$. It is easy to see that 
\begin{equation}
ssd(G) = \frac{1}{|\mathcal{L}(G)|^2} \underset{H \in \mathcal{L}(G)}{\sum}|Comm_G(H)|,
\end{equation}
where \begin{equation}Comm_G(H) = \{K \in \mathcal{L}(G) \ | \ [H,K] = 1\},\end{equation} and that $ssd(G)$ is the probability that the subgroup $[H, K]$ of an arbitrarily chosen pair of subgroups $H, K$ of a  group $G$ is equal to the trivial subgroup of $G$. Equivalently, $ssd(G)$ expresses the probability that, randomly picked two subgroups of $G$, the subgroup generated by their commutators is trivial, and, in particular, the two subgroups are permutable. The present paper is devoted to study this notion, showing that it is related  to the previous investigations in the area of the measure theory of finite groups.

\section{Some basic properties}
There are some considerations which come by default with the strong subgroup commutativity degree. A group $G$ is \textit{quasihamiltonian}, if all pairs of its subgroups are permutable. $G$  is called \textit{modular}, if $\mathcal{L}(G)$ satisfies the well--known \textit{modular law} (see \cite{rs}). Quasihamiltonian groups were classified by  Iwasawa (see \cite[Chapter 6]{b1} or \cite[Chapter 2]{rs}), who proved that they are nilpotent and modular. This is equivalent to say that a group $G$ is quasihamiltonian if and only if all its Sylow $p$-subgroups are modular (see \cite[Exercise 3 at p.87]{rs}), being $p$ a prime.  Therefore the knowledge of quasihamiltonian groups may be reduced to that of modular $p$-groups. In literature,  for $m\ge3$ the groups \begin{equation}M(p^m)=\langle x,y \ | \ x^{p^{m-1}}=y^p=1, y^{-1}xy=x^{p^{m-2}+1}\rangle= \langle y \rangle \ltimes \langle x \rangle, \end{equation} are nonabelian modular $p$-groups and their properties have interested the researches of many authors in various contexts (see \cite{b1, rs, mt}). An immediate observation is the following. If $G=M(p^m)$, then $[\langle x \rangle, \langle y \rangle]\not=1$ and consequently $sd(G)=1$ but $ssd(G)\not=1$. In this sense, it is important to know when the strong subgroup commutativity degree is trivial.

\begin{prop}\label{trivial} 
A group $G$ has $ssd(G)=1$ if and only if it is abelian.
\end{prop}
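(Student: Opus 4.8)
The plan is to unwind the definition of $ssd(G)$ and reduce both implications to elementary facts about commutators. The first observation I would make is that, since $ssd(G)$ is the ratio of the number of pairs $(H,K) \in \mathcal{L}(G)^2$ with $[H,K]=1$ to the total number $|\mathcal{L}(G)|^2$ of such pairs, the equality $ssd(G)=1$ is equivalent to the statement that $[H,K]=1$ holds for \emph{every} pair $(H,K)$ of subgroups of $G$. This reformulation is what both directions will exploit.

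For the implication from abelian to $ssd(G)=1$, I would argue directly: if $G$ is abelian then $[h,k]=h^{-1}k^{-1}hk=1$ for all $h,k \in G$, so for any two subgroups $H,K$ we have $[H,K]=\langle [h,k] \mid h\in H,\ k\in K\rangle = 1$. Hence every pair is counted in the numerator, and therefore $ssd(G)=1$.

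For the converse, I would specialize the condition obtained in the first step to the single pair $H=K=G$. Then $ssd(G)=1$ forces $[G,G]=1$, i.e. the derived subgroup of $G$ is trivial, which is precisely the statement that $G$ is abelian. Equivalently, one may apply the condition to the cyclic subgroups $\langle x \rangle$ and $\langle y \rangle$ for arbitrary $x,y \in G$ to obtain $[x,y]=1$, whence $G$ is abelian.

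I do not expect any genuine obstacle here: the result is immediate once the meaning of the extremal value $ssd(G)=1$ is made explicit. The only point requiring a line of care is the passage from ``all commutators $[h,k]$ vanish'' to ``$[H,K]=1$ for all subgroups,'' which follows at once since $[H,K]$ is by definition the subgroup generated by those commutators.
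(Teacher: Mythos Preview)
Your proof is correct and follows essentially the same route as the paper: both unwind the definition to reduce $ssd(G)=1$ to the condition that $[H,K]=1$ for all subgroups $H,K$, and then pass between this and the abelianness of $G$ via the vanishing of all commutators $[h,k]$. Your specialization to $H=K=G$ (or to cyclic subgroups) is exactly the ``in particular'' step the paper invokes.
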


\begin{proof} We have that
$ssd(G) = 1$ if, and only if, $[H, K] = 1$ for all subgroups $H$ and $K$ of $G$, if, and only
if, $[h, k] = 1$ for all $h \in H$, $k \in K$ and for all $H$ and $K$ in $G$. This implies, in
particular, that $[h, k] = 1$ for all $h, k \in G$, that is, $G$ is abelian. Conversely,
if $G$ is abelian, then it is clear that $ssd(G) = 1$.
\end{proof}

On another hand, the following relation shows that $ssd(G)$ is related to $d(H,K)$ in a deep way.

\begin{thm}\label{t:fundamental} 
Let $H$ and $K$ be two subgroups of a group $G$. Then 
\[ssd(G) <  \frac{|G|^2}{|\mathcal{L}(G)|^2} \sum_{H,K \in \mathcal{L}(G)} d(H,K).\]
\end{thm}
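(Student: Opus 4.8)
The plan is to exploit the elementary but decisive observation that the local commutativity degree $d(H,K)$ attains its maximal value exactly on the pairs that contribute to $ssd(G)$. Writing
\[
c(H,K)=|\{(h,k)\in H\times K \ | \ [h,k]=1\}|,
\]
so that $d(H,K)=c(H,K)/(|H||K|)$, the identity element of $H$ commutes with every element of $K$, whence the numerator is never zero; and, more importantly, $d(H,K)=1$ if and only if every $h\in H$ commutes with every $k\in K$, that is, if and only if $[H,K]=1$. In all cases $0<d(H,K)\le 1$. This equivalence is the only structural input the argument needs.

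First I would rewrite the counting numerator of $ssd(G)$ as a sum of values of $d$. Setting $N=|\{(H,K)\in\mathcal{L}(G)^2 \ | \ [H,K]=1\}|$, so that $ssd(G)=N/|\mathcal{L}(G)|^2$, the observation above gives
\[
N=\sum_{\substack{(H,K)\in\mathcal{L}(G)^2\\ [H,K]=1}} 1
=\sum_{\substack{(H,K)\in\mathcal{L}(G)^2\\ [H,K]=1}} d(H,K)
\le \sum_{H,K\in\mathcal{L}(G)} d(H,K),
\]
where the inequality holds because each omitted term $d(H,K)$ with $[H,K]\ne 1$ is strictly positive.

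Next I would insert the factor $|G|^2$, which is where strictness is produced cheaply. Since $G$ is a non-trivial finite group we have $|G|^2\ge 4>1$, and $N\ge 1$ because the pair $(\{1\},\{1\})$ always satisfies $[H,K]=1$. Combining this with the previous display,
\[
|G|^2\sum_{H,K\in\mathcal{L}(G)} d(H,K)\ \ge\ |G|^2 N\ >\ N,
\]
and dividing both sides by $|\mathcal{L}(G)|^2$ yields
\[
\frac{|G|^2}{|\mathcal{L}(G)|^2}\sum_{H,K\in\mathcal{L}(G)} d(H,K)\ >\ \frac{N}{|\mathcal{L}(G)|^2}=ssd(G),
\]
which is precisely the claim.

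I expect no serious obstacle: the whole argument rests on the equivalence $d(H,K)=1$ if and only if $[H,K]=1$, after which the factor $|G|^2$ forces the inequality to be strict. The one point deserving care is the degenerate case $G=\{1\}$, where $|G|^2=1$ and the strict inequality would fail; this is harmless, since the statement is meant for a non-trivial $G$ (it quantifies the gap between $sd$ and $ssd$ noted after Proposition \ref{trivial}). As a remark, one could observe that when $G$ is non-abelian the estimate $\sum_{H,K} d(H,K)>N$ is already strict by Proposition \ref{trivial}, providing a second, independent source of strictness.
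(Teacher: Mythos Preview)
Your argument is correct and considerably cleaner than the paper's. The single structural input --- that $d(H,K)=1$ if and only if $[H,K]=1$ --- immediately identifies $N=\sum_{[H,K]=1}d(H,K)\le \sum_{H,K}d(H,K)$, and the factor $|G|^2>1$ (for non-trivial $G$) supplies strictness. Your handling of the degenerate case $G=\{1\}$ is also appropriate: the strict inequality genuinely fails there, and the paper's own proof has the same limitation, since equality would hold throughout its chain as well.

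The paper takes a different route. It argues through centralizers, building the chain
\[
|\mathcal{L}(G)|^2\,ssd(G)=\sum_H|Comm_G(H)|\;<\;\sum_{H,K}|C_K(H)|\;\le\;\sum_{H,K}\sum_{h\in H}|C_K(h)|=\sum_{H,K}d(H,K)\,|H|\,|K|\;\le\;|G|^2\sum_{H,K}d(H,K),
\]
with strictness coming from the overcount in passing from a union of the $C_K(H)$ to their sum (the identity lies in every $C_K(H)$). This is longer and, as written in the paper, the opening ``claim'' conflates a set of elements with a set of subgroups. What the paper's approach buys, however, is the intermediate inequality $\sum_H|Comm_G(H)|\le\sum_{H,K}|C_K(H)|=\sum_H|C_G(H)|$ and the identity $\sum_{H,K}d(H,K)\,|H|\,|K|=\sum_{H,K}\sum_{h\in H}|C_K(h)|$, both of which are reused verbatim in the proof of Theorem~\ref{t:again}. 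Your direct counting argument bypasses these intermediate quantities entirely, so it does not feed into the subsequent result in the same way; if you were writing the section you would need a separate (though still easy) argument for Theorem~\ref{t:again}.
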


\begin{proof} We claim that \begin{equation}\bigcup_{K\in \mathcal{L}(G)}C_K(H)=Comm_G(H).\end{equation} Let $T={\underset{K\in \mathcal{L}(G)} \bigcup} C_K(H)$ and $t \in T$. Then there exists a $K_t \in \mathcal{L}(G)$ containing $t$ such that $t\in C_{K_t}(H)$, that is, $[t,H]=1$, which means that $t$ permutes with all elements of $H$. In particular,  the powers of $t$ permutes with all elements of $H$ and so $[\langle t \rangle, H]=1$, which means $\langle t \rangle$ is in $Comm_G(H)$. We conclude that $T \subseteq Comm_G(H)$. Conversely, if $K \in \mathcal{L}(G)$ is in $Comm_G(H)$, then $[K, H]=1$ and so $K\subseteq C_G(H)$, then $K \subseteq T$. The claim follows.


Therefore
\begin{equation}|\mathcal{L}(G)|^2 \ ssd(G)=  \sum_{H \in \mathcal{L}(G)}|Comm_G(H)|=\sum_{H \in \mathcal{L}(G)}\left|\bigcup_{K\in \mathcal{L}(G)}C_K(H)\right|\end{equation}
\[< \sum_{K \in \mathcal{L}(G)} \sum_{H \in \mathcal{L}(G)} |C_K(H)|\]
and we note that the equality cannot occur here as the identity $1 \in C_K(H)$ for all $H$ and $K$ in $\mathcal{L}(G)$. Since $C_K(H) \subseteq C_K(h)$ whenever $h \in H$, we may continue, finding the following upper bound
\begin{equation}\label{tech}
\le \sum_{K \in \mathcal{L}(G)} {\underset{H \in \mathcal{L}(G)} {\underset{h \in H}\sum}} |C_K(h)| = \sum_{H,K \in \mathcal{L}(G)} \Big( \sum_{h \in H}|C_K(h)|\Big) 
\end{equation}
\[=\sum_{H,K \in \mathcal{L}(G)} d(H,K) \ |H | \ |K| \le |G|^2 \ \sum_{H,K \in \mathcal{L}(G)} d(H,K).\]
\end{proof}

\begin{rem} We want just to illustrate two points of views which allow us to decide whether a group $G$ is abelian or not. The first deals with the subgroups: from Proposition \ref{trivial} $G$ is abelian if and only if $ssd(G)$ is trivial. The second deals with the elements: $G$ is abelian if and only if $d(G)$ is trivial. Theorem \ref{t:fundamental} is relevant, because it correlates $d(G)$ with $ssd(G)$. This is very helpful, because we have literature on $d(G)$ but few is known about $ssd(G)$ and $sd(G)$.
\end{rem}

In virtue of the previous remark, the following result is significative and answers partially some open questions in \cite{mtbis}. We will see, concretely, that the argument of Theorem \ref{t:fundamental} is very general and can be adapted to the context of $sd(G)$.

\begin{thm}\label{t:again} Let $H$ and $K$ be two subgroups of a group $G$. Then 
\[sd(G) \ge  \frac{1}{|\mathcal{L}(G)|^2} \sum_{H\in \mathcal{L}(G)} \left|\bigcap_{h \in H} C_K(h)\right| \] with
\[\sum_{H,K \in \mathcal{L}(G)} d(H,K) \ |H | \ |K| \ge \sum_{H,K\in \mathcal{L}(G)} \left|\bigcap_{h \in H} C_K(h)\right|.\]
\end{thm}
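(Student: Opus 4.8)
The plan is to mirror the proof of Theorem \ref{t:fundamental}, replacing the union $\bigcup_{K}C_K(H)$ by the intersection $\bigcap_{h\in H}C_K(h)$ and reversing the direction of every estimate, so as to obtain lower bounds in place of upper bounds. I would begin with the second inequality, which is the elementary one. The first step is to record the identity $d(H,K)\,|H|\,|K|=\sum_{h\in H}|C_K(h)|$ coming from the definition of $d(H,K)$ given in the introduction. For fixed $H$ and $K$ the set $\bigcap_{h\in H}C_K(h)$ is contained in $C_K(h)$ for every $h\in H$, and since $1\in H$ the sum $\sum_{h\in H}|C_K(h)|$ has at least one term dominating $\bigl|\bigcap_{h\in H}C_K(h)\bigr|$; hence $\sum_{h\in H}|C_K(h)|\ge\bigl|\bigcap_{h\in H}C_K(h)\bigr|$ term by term. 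Summing over all pairs $H,K\in\mathcal{L}(G)$ then yields the second displayed inequality at once.

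For the first inequality I would first identify the intersection group-theoretically. An element $k\in K$ lies in $\bigcap_{h\in H}C_K(h)$ precisely when $[h,k]=1$ for every $h\in H$, that is, when $[H,\langle k\rangle]=1$; thus $\bigcap_{h\in H}C_K(h)=C_K(H)$ is the centralizer of $H$ inside $K$. As in the claim established within the proof of Theorem \ref{t:fundamental}, every such $k$ produces a cyclic subgroup $\langle k\rangle$ with $H\langle k\rangle=\langle k\rangle H$, so $\langle k\rangle\in\mathcal{C}(H)$. Tracking this correspondence $k\mapsto\langle k\rangle$, together with the observation that any subgroup contained in $C_G(H)$ automatically permutes with $H$, gives the comparison $|\mathcal{C}(H)|\ge\bigl|\bigcap_{h\in H}C_K(h)\bigr|$. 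Inserting this into the defining formula $sd(G)=|\mathcal{L}(G)|^{-2}\sum_{H\in\mathcal{L}(G)}|\mathcal{C}(H)|$ and summing over $H$ produces the first displayed lower bound for $sd(G)$.

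The main obstacle, and the place where the adaptation of Theorem \ref{t:fundamental} must be carried out with care, is precisely the cardinality step $|\mathcal{C}(H)|\ge\bigl|\bigcap_{h\in H}C_K(h)\bigr|$. Here $\mathcal{C}(H)$ counts \emph{subgroups} of $G$ that permute with $H$, whereas $C_K(H)$ counts \emph{elements} of $K$, and the passage $k\mapsto\langle k\rangle$ between the two levels is not injective. I would therefore need to make explicit the same identification between elements and the cyclic subgroups they generate that underlies the union claim in Theorem \ref{t:fundamental}, and to verify that the resulting count does not collapse in a way that invalidates the bound. This is the delicate point on which the whole argument rests; once it is secured, the remaining manipulations are the formal bookkeeping already carried out in Theorem \ref{t:fundamental}, with the chain of inequalities read in the opposite order.
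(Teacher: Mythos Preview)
Your approach coincides with the paper's. For the second inequality you use $d(H,K)\,|H|\,|K|=\sum_{h\in H}|C_K(h)|$ together with $|C_K(h)|\ge\bigl|\bigcap_{h'\in H}C_K(h')\bigr|$ and sum over $(H,K)$; the paper does precisely this, after recording the identity $\bigcap_{h\in H}C_K(h)=C_K(H)$.

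For the first inequality the paper follows exactly the route you sketch: it argues that each $s\in C_K(H)$ yields $\langle s\rangle\in\mathcal{C}(H)$, writes this as an ``inclusion'' $\bigcup_{K}C_K(H)\subseteq\mathcal{C}(H)$, and then passes to cardinalities to obtain $|\mathcal{C}(H)|\ge|C_K(H)|$. You are right to single out this cardinality step as the delicate point: $\mathcal{C}(H)$ is a set of \emph{subgroups} while $C_K(H)$ is a set of \emph{elements}, and the map $k\mapsto\langle k\rangle$ is not injective. The paper does not address this mismatch any more than you do; it simply asserts the inclusion and moves on. In fact the step, and hence the stated bound, fails as written: for $G=K=C_p\times C_p$ one has $sd(G)=1$, while $\bigcap_{h\in H}C_K(h)=G$ for every $H$, so the right-hand side equals $|G|/|\mathcal{L}(G)|=p^2/(p+3)>1$ once $p\ge 5$. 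Thus the obstacle you flag is a genuine gap that the paper's own argument does not close; your caution here is warranted, and there is nothing further in the paper's proof that would let you secure the step.
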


\begin{proof} From Theorem \ref{t:fundamental} (more precisely from \eqref{tech}), we may restrict to prove only  the first inequality. In order to do this, we claim that \begin{equation}C_K(H) \subseteq \bigcup_{K\in \mathcal{L}(G)}C_K(H) \subseteq \mathcal{C}(H).\end{equation} The first inclusion is trivial. Let $S={\underset{K\in \mathcal{L}(G)} \bigcup} C_K(H)$ and $s \in S$. Then there exists a $K_s \in \mathcal{L}(G)$ containing $s$ such that $s\in C_{K_s}(H)$, that is, $[s,H]=1$, which means that $s$ permutes with all elements of $H$. In particular,  $[\langle s \rangle, H]=1$ then $\langle s \rangle H = H \langle s \rangle$, which means $\langle s \rangle \in \mathcal{C}(H)$. We conclude that $S \subseteq \mathcal{C}(H)$.


Therefore
\begin{equation}|\mathcal{L}(G)|^2 \ sd(G)=  \sum_{H \in \mathcal{L}(G)}|\mathcal{C}(H)| \ge \sum_{H \in \mathcal{L}(G)}\left|\bigcup_{K\in \mathcal{L}(G)}C_K(H)\right| 
\ge \sum_{H \in \mathcal{L}(G)}|C_K(H)| \end{equation}
but we observe that in general the following is true  
\begin{equation}{\underset{h \in H}\bigcap} C_K(h)= C_K(H)\end{equation}
so that
\begin{equation}
= \sum_{H \in \mathcal{L}(G)}\left|\bigcap_{h \in H} C_K(h)\right|.
 \end{equation}
 On another hand, we note that 
 \begin{equation}
\sum_{H,K \in \mathcal{L}(G)} d(H,K) \ |H | \ |K|= \sum_{H,K \in \mathcal{L}(G)} \  \Big(\sum_{h \in H}|C_K(h)|\Big) \end{equation}
\[= \sum_{K \in \mathcal{L}(G)} \  \Big({\underset{H \in \mathcal{L}(G)}{\underset{h \in H}\sum}}|C_K(h)|\Big)  
\ge  \sum_{K \in \mathcal{L}(G)} \ \Big(\sum_{H \in \mathcal{L}(G)} \left|\bigcap_{h \in H} C_K(h)\right|\Big).\]

\end{proof}

In the rest of this section we reformulate $ssd(G)$ in terms of arithmetic functions. It is possible to rewrite $ssd(G)$ in the following form:
\begin{equation}ssd(G)= \frac{1}{|\mathcal{L}(G)|^2 } {\underset {X,Y \in \mathcal{L}(G)} \sum \varphi(X, Y)},
\end{equation} where  $\varphi :
\mathcal{L}(G)^2 \rightarrow \{0,1\}$ is the function defined by
\begin{equation}\varphi (X, Y)=\left\{ \begin{array}{lcl} 1,&\,\,& \mathrm{if} \ [X, Y]=1,\\
0,&\,\,& \mathrm{if} \ [X, Y]\not= 1 .\end{array} \right. \end{equation}
The reader may note that $\varphi(X,Y)=\varphi(Y,X)$, that is, $\varphi$ is symmetric in the variables $X$ and $Y$. There is a corresponding property of symmetry for the subgroup commutativity degree in \cite[Section 2]{mt}, but, in general, this property depends on the  permutability which we are going to study. For instance, this does not happen for weaker forms of permutability with respect to the maximal sugroups, as shown in \cite{or}. However, the introduction of the function $\varphi$ allows us to  simplify the notations. In fact, if $Z$ is a given subgroup of $G$ and we consider the sets $\mathcal{B}_1=\{(X \in  \mathcal{L}(G) :  Z \subseteq X \}$ and $\mathcal{B}_2=\{X \in \mathcal{L}(G) : X \subset Z\},$ then $\mathcal{B}_1 \cup
\mathcal{B}_2 \subseteq  \mathcal{L}(G)$ and so
\begin{equation}\label{e:1} |\mathcal{L}(G)|^2 \ ssd(G)\ge  \sum_{X, Y\in \mathcal{B}_1 \cup \mathcal{B}_2} \varphi(X,Y)\end{equation}
\[=\sum_{X,Y\in \mathcal{B}_1} \varphi(X,Y) + \sum_{X,Y\in \mathcal{B}_2} \varphi(X,Y)
+2 \sum_{X\in \mathcal{B}_1} \sum_{Y\in \mathcal{B}_2} \varphi(X,Y).\]

A consequence of this equation is examined below and overlaps a similar situation for $sd(G)$ in \cite{mt}.

\begin{prop}\label{p:1} Let $G$ be a group and $N$ be a normal subgroup of $G$. Then
\[ssd(G)\geq \frac{1}{|\mathcal{L}(G)|^2} \ \Big( \Big( |\mathcal{L}(N)|+
|\mathcal{L}(G/N)|-1\Big)^2\]
\[+(ssd(N)-1)|\mathcal{L}(N)|^2+(ssd(G/N)-1)|\mathcal{L}(G/N)|^2 \Big).\]
\end{prop}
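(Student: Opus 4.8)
The plan is to specialize the general inequality \eqref{e:1} to the case $Z=N$. With this choice $\mathcal{B}_1=\{X\in\mathcal{L}(G):N\subseteq X\}$ is, by the correspondence theorem, in bijection with $\mathcal{L}(G/N)$ via $X\mapsto X/N$, so $|\mathcal{B}_1|=|\mathcal{L}(G/N)|$, while $\mathcal{B}_2=\{X\in\mathcal{L}(G):X\subset N\}=\mathcal{L}(N)\setminus\{N\}$ has $|\mathcal{B}_2|=|\mathcal{L}(N)|-1$. Since $N\in\mathcal{B}_1$ and $N\notin\mathcal{B}_2$, the two blocks are disjoint and $|\mathcal{B}_1\cup\mathcal{B}_2|=|\mathcal{L}(N)|+|\mathcal{L}(G/N)|-1$; this already accounts for the square $(|\mathcal{L}(N)|+|\mathcal{L}(G/N)|-1)^2$ on the right-hand side as the total number of ordered pairs drawn from $\mathcal{B}_1\cup\mathcal{B}_2$. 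I would then estimate the three sums appearing on the right of \eqref{e:1}.

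For the block $\mathcal{B}_2$ I would use that strong commutativity is intrinsic: for $X,Y\subseteq N$ one has $[X,Y]=1$ in $G$ if and only if $[X,Y]=1$ in $N$, so $\sum_{X,Y\in\mathcal{B}_2}\varphi(X,Y)$ equals the number of commuting pairs of $\mathcal{L}(N)$ minus those involving $N$ itself. Since the latter is at most $2|\mathcal{L}(N)|-1$, this block contributes at least $ssd(N)|\mathcal{L}(N)|^2-(2|\mathcal{L}(N)|-1)$, which is exactly the term $(ssd(N)-1)|\mathcal{L}(N)|^2$ together with part of the linear corrections hidden in the expansion of the square. For the block $\mathcal{B}_1$ I would pass to $G/N$ through the correspondence $X\mapsto X/N$, aiming to reproduce $(ssd(G/N)-1)|\mathcal{L}(G/N)|^2$, while for the mixed sum every pair $X\in\mathcal{B}_1$, $Y\in\mathcal{B}_2$ satisfies $Y\subset N\subseteq X$. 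Collecting the three contributions and expanding $(|\mathcal{L}(N)|+|\mathcal{L}(G/N)|-1)^2$ should, after the same bookkeeping used for $sd(G)$ in \cite{mt}, assemble the claimed right-hand side.

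The step I expect to be the genuine obstacle is the treatment of $\mathcal{B}_1$ and of the mixed sum, where the strong notion behaves very differently from ordinary permutability. For $sd(G)$ one has the two clean facts that $XY=YX\iff (X/N)(Y/N)=(Y/N)(X/N)$ when $X,Y\supseteq N$, and that $XY=YX$ automatically when $Y\subseteq N\subseteq X$; these make the corresponding sums equal to $sd(G/N)|\mathcal{L}(G/N)|^2$ and to $|\mathcal{B}_1|\,|\mathcal{B}_2|$, so the analogue of this proposition for $sd(G)$ even holds with equality in the inner sum. Neither fact survives for $\varphi$: the implication $[X,Y]=1\Rightarrow[X/N,Y/N]=1$ holds, but its converse fails, since one only gets $[X,Y]\subseteq N$, so that $\sum_{X,Y\in\mathcal{B}_1}\varphi(X,Y)\le ssd(G/N)|\mathcal{L}(G/N)|^2$, which is the reverse of the inequality I need; and $[X,Y]=1$ is no longer automatic when $Y\subseteq N\subseteq X$. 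Thus a naive transcription of the $sd$-argument controls these sums from the wrong side, and the crux is to show that the resulting deficit is compensated. I would attack this either by restricting the $\mathcal{B}_1$- and mixed contributions to subgroups lying in $C_G(N)$, where strong commutativity does descend faithfully to and lift from the quotient, or by enlarging the counted family beyond $\mathcal{B}_1\cup\mathcal{B}_2$ to recover the commuting pairs lost in passing to $G/N$; verifying that the recovered total is at least $ssd(G/N)|\mathcal{L}(G/N)|^2$ is the heart of the matter.
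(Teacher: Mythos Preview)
Your plan is exactly the paper's: set $Z=N$ in \eqref{e:1} and evaluate the three sums. The paper then simply \emph{asserts} the three identities
\[
\sum_{X,Y\in\mathcal{B}_1}\varphi(X,Y)=|\mathcal{L}(G/N)|^2\,ssd(G/N),\qquad
\sum_{X,Y\in\mathcal{B}_2}\varphi(X,Y)=|\mathcal{L}(N)|^2\,ssd(N)-2|\mathcal{L}(N)|+1,
\]
\[
2\sum_{X\in\mathcal{B}_1}\sum_{Y\in\mathcal{B}_2}\varphi(X,Y)=2|\mathcal{L}(G/N)|\,(|\mathcal{L}(N)|-1),
\]
and substitutes them into \eqref{e:1}. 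No further argument is given.

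You have correctly put your finger on the problem: the first and third identities are the ones that hold for $sd$ (because $XY=YX$ descends to and lifts from $G/N$, and because $Y\subseteq N\subseteq X$ forces $XY=YX$), but \emph{fail} for $ssd$. For $X,Y\in\mathcal{B}_1$ one only gets $[X,Y]=1\Rightarrow[X/N,Y/N]=1$, so $\sum_{\mathcal{B}_1\times\mathcal{B}_1}\varphi\le|\mathcal{L}(G/N)|^2 ssd(G/N)$, the wrong direction; and for $Y\subset N\subseteq X$ there is no reason for $[X,Y]=1$ (take $X=G$ and $Y$ any non-central proper subgroup of $N$), so the mixed sum is in general strictly smaller than $|\mathcal{B}_1|\,|\mathcal{B}_2|$, again the wrong direction. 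Only the $\mathcal{B}_2$-block goes the right way, as you note.

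In short, the paper's proof is a verbatim transcription of the $sd$-argument from \cite{mt} and does not address the obstacle you raise; the gap you identify is real and is present in the paper itself, not in your reasoning. Your proposed repairs (restricting to subgroups of $C_G(N)$, or enlarging the counted family) go beyond what the paper offers.
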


\begin{proof} 
We are going to rewrite more properly the terms in the left side of \eqref{e:1}.
\begin{equation}
|\mathcal{L}(G/N)|^2 \ ssd(G/N)= {\underset{X,Y\in \mathcal{B}_1}\sum}\varphi(X,Y);
\end{equation}
\begin{equation}|\mathcal{L}(N)|^2 \ ssd(G/N) -2 |\mathcal{L}(N)| +1  = {\underset{X,Y\in \mathcal{B}_2 \cup
\{N\}}\sum}\varphi(X,Y)\end{equation}
\[-2 {\underset{X\in \mathcal{B}_2 \cup \{N\}}\sum}\varphi(X,N)+1={\underset{X,Y\in
\mathcal{B}_2}\sum}\varphi(X,Y);\]
\begin{equation}
2 |\mathcal{L}(G/N)| (|\mathcal{L}(N)|-1)  = 2 |\mathcal{B}_1| |\mathcal{B}_2| = 2 {\underset{X\in \mathcal{B}_1
}\sum} {\underset{Y\in \mathcal{B}_2}\sum}\varphi(X,Y).
\end{equation}
Replacing these expressions in \eqref{e:1}, the result follows.
\end{proof}

We list three consequences of Proposition \ref{p:1}, overlapping similar situations for $sd(G)$ in \cite{mt}. Their proof is omitted, since it is enough to note that for a normal abelian subgroup $N$ of $G$ we have $ssd(G/N)=1$ by Proposition \ref{p:1}, and, if it is of prime index in $G$, then $|\mathcal{L}(G/N)|=2$ .

\begin{cor}\label{c:1} Let $G$ be a group and $N$ be a normal subgroup of $G$ such that $G/N$ and $N$ are abelian.
Then \[ssd(G)\ge \frac{1}{|\mathcal{L}(G)|}\Big(|\mathcal{L}(N)|+ |\mathcal{L}(G/N)|-1\Big)^2.\]
\end{cor}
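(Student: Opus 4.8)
The plan is to specialize Proposition~\ref{p:1} to the hypothesis that both $N$ and $G/N$ are abelian, and then simplify the resulting expression. First I would invoke Proposition~\ref{trivial}, which tells us that an abelian group has strong subgroup commutativity degree equal to $1$. Applying this to the two abelian quotients and subgroups at hand, I would set $ssd(N)=1$ and $ssd(G/N)=1$. The point of doing this is that these are precisely the quantities appearing in the two correction terms $(ssd(N)-1)$ and $(ssd(G/N)-1)$ on the right-hand side of the inequality in Proposition~\ref{p:1}.

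The key computational step is then immediate: once $ssd(N)=1$ and $ssd(G/N)=1$, both correction terms $(ssd(N)-1)|\mathcal{L}(N)|^2$ and $(ssd(G/N)-1)|\mathcal{L}(G/N)|^2$ vanish identically, since each factor of the form $(1-1)$ equals zero. What survives on the right-hand side is only the leading square term
\[
\frac{1}{|\mathcal{L}(G)|^2}\Big(|\mathcal{L}(N)|+|\mathcal{L}(G/N)|-1\Big)^2.
\]
This is almost the claimed bound, except that the stated corollary has a denominator of $|\mathcal{L}(G)|$ rather than $|\mathcal{L}(G)|^2$.

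The remaining issue to resolve is this apparent discrepancy in the power of $|\mathcal{L}(G)|$ in the denominator. I expect the resolution to come from a relation between $|\mathcal{L}(G)|$ and the combinatorial quantity $|\mathcal{L}(N)|+|\mathcal{L}(G/N)|-1$: the subgroups counted in $\mathcal{B}_1\cup\mathcal{B}_2$ together with $N$ itself number exactly $|\mathcal{L}(N)|+|\mathcal{L}(G/N)|-1$ (the overlap being the single subgroup $N$, shared between the chains below and above it by the correspondence theorem), and this count is bounded above by $|\mathcal{L}(G)|$. Thus one factor of $\big(|\mathcal{L}(N)|+|\mathcal{L}(G/N)|-1\big)$ may be absorbed against one factor of $|\mathcal{L}(G)|$ in the denominator, yielding the single power of $|\mathcal{L}(G)|$ claimed. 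The main obstacle is therefore not the algebra but correctly justifying this last estimate $|\mathcal{L}(N)|+|\mathcal{L}(G/N)|-1\le|\mathcal{L}(G)|$, which must be invoked to pass from the bound of Proposition~\ref{p:1} to the sharper-looking form stated here; once that inequality is in hand, the corollary follows directly.
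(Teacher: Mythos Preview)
Your approach---specializing Proposition~\ref{p:1} by invoking Proposition~\ref{trivial} to set $ssd(N)=ssd(G/N)=1$, so that the two correction terms vanish---is exactly what the paper does (the paper omits the proof and simply points to this substitution). The bound you correctly obtain from that argument is
\[
ssd(G)\;\ge\;\frac{1}{|\mathcal{L}(G)|^{2}}\Big(|\mathcal{L}(N)|+|\mathcal{L}(G/N)|-1\Big)^{2},
\]
with $|\mathcal{L}(G)|^{2}$ in the denominator.

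The discrepancy you noticed is a typo in the printed statement, not a gap in your derivation, and your attempt to bridge it is where things go wrong. Write $A=|\mathcal{L}(N)|+|\mathcal{L}(G/N)|-1$ and $B=|\mathcal{L}(G)|$. You correctly observe $A\le B$ (the subgroups in $\mathcal{B}_1\cup\mathcal{B}_2$ lie in $\mathcal{L}(G)$), but this inequality only lets you \emph{weaken} the lower bound $A^{2}/B^{2}$, never strengthen it to $A^{2}/B$: since $B\ge 1$ we have $A^{2}/B\ge A^{2}/B^{2}$, so the implication would have to run the wrong way. Concretely, the printed inequality is false as stated: for $G=S_{3}$ and $N=A_{3}\cong C_{3}$ one has $A=2+2-1=3$, $B=|\mathcal{L}(S_{3})|=6$, and $A^{2}/B=3/2>1\ge ssd(G)$. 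So keep your first two paragraphs, replace the exponent in the statement by $|\mathcal{L}(G)|^{2}$, and delete the ``absorption'' step entirely.
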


\begin{cor}\label{c:2} Let $G$ be a group and $N$ be a normal subgroup of $G$ of prime index.
Then \[ssd(G)\ge \frac{1}{|\mathcal{L}(G)|^2} \Big(ssd(N)|\mathcal{L}(N)|^2+2|\mathcal{L}(N)| +1\Big).\]
\end{cor}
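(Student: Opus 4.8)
The plan is to specialize Proposition \ref{p:1} to the present situation, so that the entire argument reduces to computing the two invariants attached to the quotient $G/N$ and then simplifying the resulting expression. Since $N$ is normal of prime index $p$ in $G$, the quotient $G/N$ is a group of order $p$, hence cyclic and in particular abelian. Two consequences follow at once. First, by Proposition \ref{trivial} we have $ssd(G/N)=1$. Second, a group of prime order possesses exactly two subgroups, namely the trivial one and the whole group, so that $|\mathcal{L}(G/N)|=2$.

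Next I would substitute $ssd(G/N)=1$ and $|\mathcal{L}(G/N)|=2$ directly into the lower bound furnished by Proposition \ref{p:1}. The term $(ssd(G/N)-1)|\mathcal{L}(G/N)|^2$ then vanishes, which is the decisive simplification, while the leading square becomes
\[
\Big(|\mathcal{L}(N)|+|\mathcal{L}(G/N)|-1\Big)^2=\big(|\mathcal{L}(N)|+1\big)^2=|\mathcal{L}(N)|^2+2|\mathcal{L}(N)|+1 .
\]
Adding the surviving contribution $(ssd(N)-1)|\mathcal{L}(N)|^2$ and cancelling the two terms $\pm|\mathcal{L}(N)|^2$ leaves $ssd(N)|\mathcal{L}(N)|^2+2|\mathcal{L}(N)|+1$; dividing by $|\mathcal{L}(G)|^2$ yields precisely the claimed inequality.

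There is no real obstacle here: all the substance is carried by the general recursive estimate of Proposition \ref{p:1}, and what remains is a short substitution together with the cancellation of two quadratic terms. The only point deserving a moment's care is the verification that $G/N$ is abelian, so that Proposition \ref{trivial} legitimately forces the quotient's contribution to equal $1$; but this is immediate from the primality of the index. Thus the corollary is simply the clean specialization of the general bound to normal subgroups of prime index.
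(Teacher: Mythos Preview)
Your argument is correct and matches the paper's own approach: the paper omits the proof, remarking only that one should substitute $ssd(G/N)=1$ (from Proposition~\ref{trivial}, since $G/N$ is abelian) and $|\mathcal{L}(G/N)|=2$ into Proposition~\ref{p:1}, which is exactly what you carry out in detail.
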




\begin{cor}\label{c:3} A nonabelian solvable group $G$ has 
\[ssd(G)\ge \frac{1}{|\mathcal{L}(G)|^2} \Big(ssd(G')|\mathcal{L}(G')|^2+2|\mathcal{L}(G')| +1\Big).\] In particular, if $G$ is metabelian, then
\[ssd(G)\ge \frac{1}{|\mathcal{L}(G)|^2} \Big(|\mathcal{L}(G')|^2+2|\mathcal{L}(G')| +1\Big).\]
\end{cor}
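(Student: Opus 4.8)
The plan is to derive both inequalities from Proposition \ref{p:1} by taking the normal subgroup $N$ to be the derived subgroup $G'$. Since $G/G'$ is abelian for every group $G$, Proposition \ref{trivial} gives $ssd(G/G')=1$, so the term $(ssd(G/N)-1)|\mathcal{L}(G/N)|^2$ appearing in Proposition \ref{p:1} vanishes. Writing $a=|\mathcal{L}(G')|$ and $b=|\mathcal{L}(G/G')|$, the surviving bound reads $|\mathcal{L}(G)|^2\, ssd(G)\ge (a+b-1)^2+(ssd(G')-1)a^2$, and it remains to compare this with the target quantity $ssd(G')a^2+2a+1$.

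The first thing I would do is expand the difference between the two expressions. A direct computation shows
\[(a+b-1)^2+(ssd(G')-1)a^2-\big(ssd(G')a^2+2a+1\big)=(b-2)(2a+b),\]
so the desired inequality is equivalent to $(b-2)(2a+b)\ge 0$. As $a,b\ge 1$ the factor $2a+b$ is positive, and the whole matter reduces to showing $b\ge 2$, i.e. that $G/G'$ has at least two subgroups.

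This is exactly where the hypotheses enter, and I expect it to be the only genuine point to check. Because $G$ is nonabelian we have $G'\ne 1$; because $G$ is solvable and nontrivial we have $G'\subsetneq G$, since a nontrivial solvable group cannot be perfect. Hence $G/G'$ is a nontrivial abelian group, so $|\mathcal{L}(G/G')|\ge 2$, giving $b\ge 2$ and with it the first inequality. I would emphasize that solvability is used precisely to guarantee $G'\ne G$: for a nonabelian perfect group one would have $b=1$, and the factor $(b-2)$ would reverse the sign, so the argument would break down.

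Finally, the metabelian case is immediate. If $G''=1$ then $G'$ is abelian, so Proposition \ref{trivial} yields $ssd(G')=1$; substituting this into the first inequality collapses the right-hand side to $\frac{1}{|\mathcal{L}(G)|^2}\big(|\mathcal{L}(G')|^2+2|\mathcal{L}(G')|+1\big)$, which is the claimed bound (and equals $\frac{1}{|\mathcal{L}(G)|^2}(|\mathcal{L}(G')|+1)^2$). Since every step past the identity is elementary, the essential content of the proof is the reduction to $(b-2)(2a+b)\ge 0$ together with the observation that solvability forces $G'$ to be proper.
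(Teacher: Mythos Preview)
Your argument is correct and follows the same route the paper intends: apply Proposition~\ref{p:1} with $N=G'$, use Proposition~\ref{trivial} to kill the $(ssd(G/N)-1)$ term since $G/G'$ is abelian, and then simplify. The paper omits the details entirely, so your explicit computation of $(a+b-1)^2+(ssd(G')-1)a^2-(ssd(G')a^2+2a+1)=(b-2)(2a+b)$ and the observation that solvability forces $G'\subsetneq G$ (hence $b\ge 2$) are exactly the verifications the paper leaves to the reader.
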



Now we list some general bounds, related to subgroups and quotients. In a different context, these relations have been found in \cite{or}.

\begin{thm}\label{t:1}  Let $H$ be a subgroup of a group $G$. Then
\[\frac{|\mathcal{L}(H)|^2}{|\mathcal{L}(G)|^2} \ ssd(H)\leq  ssd(G) \]
and for all subgroups $L$ and $M$ of $H$ \[\frac{1}{|\mathcal{L}(G)|^2} \sum_{L\in \mathcal{L}(H)} \left|\bigcap_{l \in L} C_M(l)\right| \le   sd(H) \leq  sd(G).\]
\end{thm}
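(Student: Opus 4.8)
The plan is to treat the three inequalities separately, recycling the counting ideas of Theorems~\ref{t:fundamental} and~\ref{t:again} together with the elementary fact that $\mathcal{L}(H)\subseteq\mathcal{L}(G)$, and hence $|\mathcal{L}(H)|\le|\mathcal{L}(G)|$, whenever $H\le G$.

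For the bound on $ssd$ I would argue by inclusion of pairs. The relation $[A,B]=1$ is intrinsic to $A$ and $B$ and is insensitive to the ambient group, so for every $A\in\mathcal{L}(H)$ we have $Comm_H(A)=Comm_G(A)\cap\mathcal{L}(H)\subseteq Comm_G(A)$ and therefore $|Comm_H(A)|\le|Comm_G(A)|$. Summing over $A\in\mathcal{L}(H)$ and then enlarging the index set to all of $\mathcal{L}(G)$ (the additional summands being nonnegative) gives
\[
|\mathcal{L}(H)|^2\,ssd(H)=\sum_{A\in\mathcal{L}(H)}|Comm_H(A)|\le\sum_{A\in\mathcal{L}(G)}|Comm_G(A)|=|\mathcal{L}(G)|^2\,ssd(G),
\]
and dividing by $|\mathcal{L}(G)|^2$ produces the first assertion.

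For the left-hand inequality of the $sd$ chain I would apply Theorem~\ref{t:again} verbatim, but with the ambient group taken to be $H$ itself and with the two distinguished subgroups equal to $L,M\in\mathcal{L}(H)$. This yields $sd(H)\ge\frac{1}{|\mathcal{L}(H)|^2}\sum_{L\in\mathcal{L}(H)}\bigl|\bigcap_{l\in L}C_M(l)\bigr|$. Since $|\mathcal{L}(H)|\le|\mathcal{L}(G)|$ forces $\frac{1}{|\mathcal{L}(H)|^2}\ge\frac{1}{|\mathcal{L}(G)|^2}$, I may replace the coefficient by the smaller one without reversing the inequality, obtaining precisely the stated lower bound for $sd(H)$.

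The inequality $sd(H)\le sd(G)$ is the step I expect to be the real obstacle. Running the inclusion-of-pairs argument again counts the permutable pairs $(A,B)\in\mathcal{L}(H)^2$ as a subfamily of those in $\mathcal{L}(G)^2$, but this only delivers $\frac{|\mathcal{L}(H)|^2}{|\mathcal{L}(G)|^2}\,sd(H)\le sd(G)$, and here the normalizing factor $\frac{|\mathcal{L}(H)|^2}{|\mathcal{L}(G)|^2}\le1$ is pointing the wrong way: plain counting cannot bridge the gap to the unnormalized $sd(H)\le sd(G)$. To close this gap I would have to import the monotonicity of the subgroup commutativity degree established in \cite{or}, or else identify the extra structural hypothesis on the embedding $H\le G$ that prevents the averaged permutability fraction from dropping; the mere containment $\mathcal{L}(H)\subseteq\mathcal{L}(G)$ is not enough, and this is where the substance of the theorem resides.
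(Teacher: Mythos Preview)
Your treatment of the first inequality and of the lower bound for $sd(H)$ is exactly the paper's argument: the paper sums the indicator $\varphi(X,Y)$ over $\mathcal{L}(H)^2\subseteq\mathcal{L}(G)^2$ to get $|\mathcal{L}(H)|^2\,ssd(H)\le|\mathcal{L}(G)|^2\,ssd(G)$, and for the lower bound it applies Theorem~\ref{t:again} to the ambient group $H$ and then weakens the coefficient $1/|\mathcal{L}(H)|^2$ to $1/|\mathcal{L}(G)|^2$, precisely as you do.

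Your caution about the last inequality $sd(H)\le sd(G)$ is entirely justified, and comparing with the paper's proof shows that the paper does not bridge the gap you identified either. The paper's proof replaces $\varphi$ by the corresponding permutability indicator $\psi$ and runs the same inclusion-of-pairs argument; what it actually obtains is the scaled inequality
\[
\frac{|\mathcal{L}(H)|^2}{|\mathcal{L}(G)|^2}\,sd(H)\le sd(G),
\]
not the unscaled $sd(H)\le sd(G)$ written in the statement. So you are not missing a trick that the paper supplies. In fact the unscaled inequality is false in general: take $G=S_3$ and $H=\langle(12)\rangle$; then $sd(H)=1$ since $H$ is abelian, while $sd(S_3)<1$ because, for instance, $\langle(12)\rangle$ and $\langle(13)\rangle$ do not permute. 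Thus what the paper's proof establishes---and what your argument also establishes---is the scaled bound, and the discrepancy lies in the statement rather than in your reasoning.
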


\begin{proof}  We proceed to prove the first inequality. The result is obviously true for $H=G$ and then we may assume $H\not=G$. Since $\mathcal{L}(H)\subseteq \mathcal{L}(G)$,
\begin{equation}|\mathcal{L}(H)|^2  \  ssd(H)={\underset{X,Y \in \mathcal{L}(H)}\sum} \varphi(X,Y)\le {\underset{X,Y \in
\mathcal{L}(G)}\sum} \varphi(X,Y)= |\mathcal{L}(G)|^2 \ ssd(G).\end{equation} The  inequality follows.

Now we proceed to prove the remaining part.  When we consider the corresponding function $\psi$, related to $sd(G)$ (details can be found in \cite{mt,mtbis}), instead of $\varphi$, we may overlap the previous argument and find that $\frac{|\mathcal{L}(H)|^2}{|\mathcal{L}(G)|^2}sd(H) \le sd(G)$. From Theorem \ref{t:again}, it follows that 
\begin{equation}\frac{1}{|\mathcal{L}(H)|^2} \sum_{L\in \mathcal{L}(H)} \left|\bigcap_{l \in L} C_M(l)\right|\le \ sd(H)\end{equation}
 then 
 \begin{equation} \frac{|\mathcal{L}(H)|^2}{|\mathcal{L}(G)|^2}  \ \Big(\frac{1}{|\mathcal{L}(H)|^2} \sum_{L\in \mathcal{L}(H)} \left|\bigcap_{l \in L} C_M(l)\right| \Big) \le  sd(H)
 \end{equation}
and the result follows.
\end{proof}

In \cite[Chapter 1]{rs}, it is shown that $\mathcal{L}(G_1 \times G_2) \ne \mathcal{L}(G_1) \times \mathcal{L}(G_2)$ in general, but if $G_1$ and $G_2$ have coprime orders then it is true. This motivates our assumption in the following proposition.

\begin{prop}\label{p:2}
For two groups $G_1$ and $G_2$ of coprime orders,
\[
ssd(G_1 \times G_2) = ssd(G_1)\cdot ssd(G_2).
\]
\end{prop}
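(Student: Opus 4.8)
The plan is to reduce the statement to the indicator-function reformulation
\[
ssd(G)= \frac{1}{|\mathcal{L}(G)|^2}\sum_{X,Y\in\mathcal{L}(G)}\varphi(X,Y)
\]
introduced above, and then to exploit the lattice decomposition recalled just before the proposition. Since $G_1$ and $G_2$ have coprime orders, we have $\mathcal{L}(G_1\times G_2)=\mathcal{L}(G_1)\times\mathcal{L}(G_2)$, so every subgroup of $G_1\times G_2$ is uniquely of the form $X_1\times X_2$ with $X_1\in\mathcal{L}(G_1)$ and $X_2\in\mathcal{L}(G_2)$; in particular $|\mathcal{L}(G_1\times G_2)|=|\mathcal{L}(G_1)|\,|\mathcal{L}(G_2)|$. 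I would invoke this directly, as it is quoted from \cite{rs}.

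The heart of the argument is to show that $\varphi$ is multiplicative along this decomposition, i.e. that for $X=X_1\times X_2$ and $Y=Y_1\times Y_2$ one has
\[
\varphi(X,Y)=\varphi(X_1,Y_1)\,\varphi(X_2,Y_2),
\]
where on the right $\varphi$ denotes the indicator functions attached to $\mathcal{L}(G_1)$ and $\mathcal{L}(G_2)$. This rests on the elementary commutator identity in a direct product, namely $[(a_1,a_2),(b_1,b_2)]=([a_1,b_1],[a_2,b_2])$, from which one reads off both inclusions $[X_1,Y_1]\times[X_2,Y_2]\subseteq[X,Y]$ (taking one coordinate trivial) and $[X,Y]\subseteq[X_1,Y_1]\times[X_2,Y_2]$ (since every generating commutator of $[X,Y]$ lies in the product). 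Hence $[X,Y]=[X_1,Y_1]\times[X_2,Y_2]$, and this is trivial exactly when both factors are trivial, which is precisely the claimed multiplicativity.

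With multiplicativity established, the double sum defining $ssd(G_1\times G_2)$ factors over the two components:
\[
\sum_{X,Y\in\mathcal{L}(G_1\times G_2)}\varphi(X,Y)
=\Big(\sum_{X_1,Y_1\in\mathcal{L}(G_1)}\varphi(X_1,Y_1)\Big)\Big(\sum_{X_2,Y_2\in\mathcal{L}(G_2)}\varphi(X_2,Y_2)\Big).
\]
Dividing by $|\mathcal{L}(G_1\times G_2)|^2=|\mathcal{L}(G_1)|^2\,|\mathcal{L}(G_2)|^2$ and recognizing each factor as $ssd(G_1)$ and $ssd(G_2)$ respectively yields the desired identity.

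The step requiring the most care is the lattice decomposition $\mathcal{L}(G_1\times G_2)=\mathcal{L}(G_1)\times\mathcal{L}(G_2)$, which genuinely uses coprimality and fails in general; however, since it is available from the literature I may use it without proof. The only new content to verify is the splitting $[X,Y]=[X_1,Y_1]\times[X_2,Y_2]$ of the commutator subgroup, which gives the multiplicativity of $\varphi$ and is a routine but indispensable computation.
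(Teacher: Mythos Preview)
Your proof is correct and follows essentially the same route as the paper. The paper uses the $Comm_G(H)$ formulation and the factorization $Comm_{G_1\times G_2}(A_1\times A_2)=Comm_{G_1}(A_1)\times Comm_{G_2}(A_2)$, while you use the equivalent indicator function $\varphi$ and the multiplicativity $\varphi(X,Y)=\varphi(X_1,Y_1)\varphi(X_2,Y_2)$; both reductions rest on the same commutator splitting $[X_1\times X_2,Y_1\times Y_2]=[X_1,Y_1]\times[X_2,Y_2]$, which you verify explicitly and the paper leaves implicit.
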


\begin{proof}
We have $\mathcal{L}(G_1 \times G_2) = \mathcal{L}(G_1) \times \mathcal{L}(G_2)$, because $G_1$ and $G_2$ have coprime orders. Therefore, with obvious meaning of symbols, 
\begin{equation}ssd(G_1 \times G_2)  = \frac{1}{|\mathcal{L}(G_1 \times G_2)|^2}\underset{A_1 \times A_2 \in \mathcal{L}(G_1 \times G_2) }{\sum} |Comm_{G_1 \times G_2}(A_1 \times A_2)|\end{equation}
\[ = \frac{1}{|\mathcal{L}(G_1)\times \mathcal{L}(G_2)|^2}\underset{A_1 \times A_2 \in \mathcal{L}(G_1) \times \mathcal{L}(G_2) }{\sum} |Comm_{G_1}(A_1) \times Comm_{G_2}(A_2)|\]
\[ = \left(\frac{1}{|\mathcal{L}(G_1)|^2}\underset{A_1 \in \mathcal{L}(G_1) }{\sum} |Comm_{G_1}(A_1)|\right) \left(\frac{1}{|\mathcal{L}(G_2)|^2}\underset{A_2 \in \mathcal{L}(G_2) }{\sum} |Comm_{G_2}(A_2)|\right)\]
\[ = ssd(G_1)\cdot ssd(G_2).\]
Hence the proposition follows.
\end{proof}

\begin{cor}\label{c:4}
Proposition \ref{p:2} is still true for finitely many factors.
\end{cor}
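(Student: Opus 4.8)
The plan is to argue by induction on the number $n$ of factors, taking Proposition \ref{p:2} as the base case $n=2$ (the case $n=1$ being trivial). Throughout, ``finitely many factors'' is understood to mean groups $G_1, G_2, \ldots, G_n$ of \emph{pairwise coprime} orders, since this is exactly the hypothesis under which the lattice identity $\mathcal{L}(G_1 \times G_2) = \mathcal{L}(G_1) \times \mathcal{L}(G_2)$ exploited in Proposition \ref{p:2} is available. Without this reading the assertion would be false, so making it explicit is the first thing I would do.

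For the inductive step, I would set $A = G_1 \times \cdots \times G_{n-1}$ and $B = G_n$, and regroup the direct product associatively as
\[
G_1 \times \cdots \times G_n = A \times B.
\]
The crucial point is that the coprimality hypothesis propagates through this regrouping: since $|A| = |G_1| \cdots |G_{n-1}|$ and each $|G_i|$ with $i < n$ is coprime to $|G_n|$, the product $|A|$ is itself coprime to $|B| = |G_n|$. Consequently Proposition \ref{p:2} applies verbatim to the pair $(A, B)$ and gives
\[
ssd(G_1 \times \cdots \times G_n) = ssd(A) \cdot ssd(G_n) = ssd(G_1 \times \cdots \times G_{n-1}) \cdot ssd(G_n).
\]

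Finally, I would invoke the inductive hypothesis on the $n-1$ pairwise coprime factors $G_1, \ldots, G_{n-1}$ to obtain $ssd(G_1 \times \cdots \times G_{n-1}) = ssd(G_1) \cdots ssd(G_{n-1})$, and substitute this into the previous display, arriving at the claimed formula
\[
ssd(G_1 \times \cdots \times G_n) = \prod_{i=1}^{n} ssd(G_i).
\]

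The main (and essentially only) obstacle is to confirm at each stage that the pair to which Proposition \ref{p:2} is applied really does have coprime orders; once the propagation of coprimality through the partial products $G_1 \times \cdots \times G_{k}$ is observed, the remainder is a routine finite induction requiring no additional structural input.
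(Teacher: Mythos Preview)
Your induction on the number of factors is correct and is a perfectly natural way to formalize the paper's one-line proof, which simply says to ``mimick the proof of Proposition \ref{p:2}.'' Both approaches rest on exactly the same ingredient---the lattice decomposition under pairwise coprimality---so there is no substantive difference.
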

\begin{proof}
We can mimick the proof of Proposition \ref{p:2}.
\end{proof}

\section{Multiple strong subgroup commutativity degree}
In analogy with $d^{(n)}(H, G)$ ($n \geq 1$), introduced in \cite{err}, the notion of strong subgroup commutativity degree, given in Section 1, can be further generalized in the following way:
\begin{equation}
ssd^{(n)}(H,G) = \frac{|\{(L_1,\dots, L_n, K) \in \mathcal{L}(H)^n \times \mathcal{L}(G)  \ | \  [L_1,\dots, L_n, K] = 1\}|}{|\mathcal{L}(H)|^n \ |\mathcal{L}(G)|}. 
\end{equation}
In particular, if $n=1$ and $H=G$, then $ssd^{(1)}(G, G)=ssd(G)$. Briefly, $ssd^{(n)}(H)$ denotes
 \begin{equation}ssd^{(n)}(H,H)=\frac{|\{(L_1,\dots, L_n, L_{n+1}) \in \mathcal{L}(H)^{n+1}   \ | \  [L_1,\dots, L_n, L_{n+1}] = 1\}|}{|\mathcal{L}(H)|^{n+1}}.\end{equation} On another hand, we note that \begin{equation}[L_1,\dots, L_n, K] = [[L_1,\dots, L_n],K] = \ldots =[[ \ldots [[L_1,L_2], L_3] \ldots L_n], K]= 1\end{equation} and so
\begin{equation}Comm_G(L_1,\dots, L_n) = \{K \in \mathcal{L}(G) \ | \  [L_1,\dots, L_n,K] = 1\},\end{equation}
\begin{equation}Comm_{H\times G}(L_1,\dots, L_{n-1}) = \{(L_n, K) \in \mathcal{L}(H) \times \mathcal{L}(G) \ | \  [[[L_1,\dots,L_{n-1}], L_n],K] = 1\}\end{equation}
\[\ldots \ldots \ldots \ldots \ldots \ldots \ldots \ldots \ldots \ldots  \ldots \ldots \ldots \ldots \ldots \ldots\]
$Comm_{H^{n-1}\times G}(L_1) = \{(L_2,L_3,\ldots,L_n, K) \in \mathcal{L}(H)^{n-1} \times \mathcal{L}(G) \ |$  \newline $ \  [\ldots [L_1,L_2],\ldots,  L_n],K] = 1\}.$

Of course, all these sets are nonempty, since they contain at least the trivial subgroup. By construction, $Comm_{H^{n-1}\times G}(L_1) \subseteq Comm_{H^{n-2}\times G}(L_1,L_2) \subseteq \ldots \subseteq Comm_{H\times G}(L_1,\dots, L_{n-1}) \subseteq Comm_G(L_1,\dots, L_n).$ From the above inclusions we observe that for $n$ which is growing the $Comm_{H^{n-1}\times G}(L_1)$ is getting to the trivial subgroup.
Therefore
\begin{equation}|\mathcal{L}(H)|^n \ |\mathcal{L}(G)| \ ssd^{(n)}(H,G) =  {\underset{L_1,\dots, L_n \in \mathcal{L}(H)} \sum} |Comm_G(L_1,\dots, L_n)|\end{equation}
\[={\underset{L_1,\dots, L_n \in \mathcal{L}(H)} \sum} |Comm_{H^{n-1}\times G}(L_1)| \]
 and to the extreme case we have
 \begin{equation}\lim_{n\rightarrow \infty} \ ssd^{(n)}(H,G)= \lim_{n\rightarrow \infty} \frac{1}{|\mathcal{L}(H)|^n \ |\mathcal{L}(G)|} \ \cdot \ \lim_{n\rightarrow \infty}  {\underset{L_1,\dots, L_n \in \mathcal{L}(H)} \sum} |Comm_{H^{n-1}\times G}(H_1)|\end{equation}
 \[= \frac{1}{|\mathcal{L}(G)|} \ \cdot \ \lim_{n\rightarrow \infty} \frac{1}{|\mathcal{L}(H)|^n} \ \cdot \ 1 =0.\]
This is a qualitative argument which shows that it is meaningful to consider values of probabilities of $ssd^{(n)}(H,G)$ for a small number of commuting subgroups. At the same time, the above construction shows that $ssd^{(n)}(H,G)$ is a strictly decreasing sequence of numbers in $[0,1]$ in the variable  $n$. Namely,
\begin{equation}ssd^{(1)}(H,G)\ge ssd^{(2)}(H,G) \ge \ldots  \ge ssd^{(n)}(H,G) \ge ssd^{(n+1)}(H,G) \ge \ldots  \end{equation}

We want to point out that  a similar treatment can be done for $sd(G)$, as proposed in a series of opens problems in \cite{mtbis}, where the corresponding version of $ssd^{(n)}(H,G)$ is called \textit{relative subgroup commutativity degree}. 

As done in Section 2, we may rewrite $ssd^{(n)}(H,G)$ in the following form:
\begin{equation}ssd^{(n)}(H,G)= \frac{1}{|\mathcal{L}(H)|^n \ |\mathcal{L}(G)| } {\underset {\underset {Y\in \mathcal{L}(G)} {X_1,\ldots, X_n \in \mathcal{L}(H)}} \sum \varphi_n(X_1,\ldots, X_n, Y)},
\end{equation} where  $\varphi_n :
\mathcal{L}(H)^n \times \mathcal{L}(G) \rightarrow \{0,1\}$ is the function defined by
\begin{equation}\varphi_n (X_1,\ldots, X_n, Y)=\left\{ \begin{array}{lcl} 1,&\,\,& \mathrm{if} \ [X_1, \ldots, X_n, Y]=1,\\
0,&\,\,& \mathrm{if} \ [X_1,\ldots, X_n, Y]\not= 1 \end{array} \right. \end{equation}
and continues to be symmetric.

\begin{prop} \label{boundingsdg}
Given  subgroup $H$ of a group $G$, \[ssd^{(n)}(H,G) \le ssd^{(n)}(G,G) \le ssd(G) \leq sd(G).\]
\end{prop}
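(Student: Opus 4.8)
The plan is to establish the chain of three inequalities separately, proceeding from left to right. The first inequality, $ssd^{(n)}(H,G) \le ssd^{(n)}(G,G)$, follows from the subgroup containment $\mathcal{L}(H) \subseteq \mathcal{L}(G)$. Working with the symmetric function $\varphi_n$, I would write $|\mathcal{L}(H)|^n |\mathcal{L}(G)|\, ssd^{(n)}(H,G) = \sum \varphi_n(X_1,\dots,X_n,Y)$ where the sum ranges over $X_1,\dots,X_n \in \mathcal{L}(H)$ and $Y \in \mathcal{L}(G)$. Since every $X_i$ ranging over $\mathcal{L}(H)$ also lies in $\mathcal{L}(G)$, this partial sum is dominated by the full sum over $X_1,\dots,X_n \in \mathcal{L}(G)$, which equals $|\mathcal{L}(G)|^{n+1}\, ssd^{(n)}(G,G)$. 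After dividing through by the appropriate normalizing factors and using $|\mathcal{L}(H)| \le |\mathcal{L}(G)|$, the inequality drops out. This mirrors exactly the argument already used for the first inequality of Theorem \ref{t:1}.

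The second inequality, $ssd^{(n)}(G,G) \le ssd(G)$, is essentially the monotonicity of the sequence $ssd^{(n)}(G,G)$ in $n$ that was asserted in the excerpt right before this proposition. Indeed $ssd^{(1)}(G,G) = ssd(G)$ by definition, and the displayed chain $ssd^{(1)}(H,G) \ge ssd^{(2)}(H,G) \ge \cdots$ specializes at $H=G$ to $ssd(G) = ssd^{(1)}(G,G) \ge ssd^{(n)}(G,G)$ for every $n \ge 1$. I would justify this monotonicity directly from the nested containments $Comm_{H^{n-1}\times G}(L_1) \subseteq \cdots \subseteq Comm_G(L_1,\dots,L_n)$ established earlier: adding one more subgroup $L_{n+1}$ into the commutator bracket can only shrink the set of subgroups that centralize the whole configuration, so each additional factor weakly decreases the averaged count.

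The third inequality, $ssd(G) \le sd(G)$, is the elementary implication that $[H,K]=1$ forces $HK=KH$, noted in the introduction. Concretely, $\varphi(X,Y) \le \psi(X,Y)$ pointwise, where $\psi$ is the indicator of permutability associated with $sd(G)$, since $[X,Y]=1$ implies $XY=YX$; summing over $\mathcal{L}(G)^2$ and dividing by $|\mathcal{L}(G)|^2$ yields the claim. Equivalently, $Comm_G(H) \subseteq \mathcal{C}(H)$ for every $H$, which is already contained in the proof of Theorem \ref{t:again}.

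I do not expect any genuine obstacle here; the result is a concatenation of facts each of which is either definitional or already proved in the excerpt. The only point requiring mild care is bookkeeping with the normalizing denominators $|\mathcal{L}(H)|^n |\mathcal{L}(G)|$ versus $|\mathcal{L}(G)|^{n+1}$ in the first inequality, so that the factor $|\mathcal{L}(H)|/|\mathcal{L}(G)| \le 1$ is applied in the correct direction; and ensuring that the monotonicity step is invoked at $H=G$ rather than for general $H$, since the stated chain of inequalities only gives $ssd^{(n)}(G,G) \le ssd(G)$ after that specialization.
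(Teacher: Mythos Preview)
Your proposal tracks the paper's argument essentially line for line, including its gaps; two of the three inequalities do not go through.

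For the first inequality, your counting gives
\[
|\mathcal{L}(H)|^n\,|\mathcal{L}(G)|\;ssd^{(n)}(H,G)\;\le\;|\mathcal{L}(G)|^{n+1}\;ssd^{(n)}(G,G),
\]
but dividing yields $ssd^{(n)}(H,G)\le\bigl(|\mathcal{L}(G)|/|\mathcal{L}(H)|\bigr)^{n}\,ssd^{(n)}(G,G)$, and that prefactor is $\ge 1$, not $\le 1$; so the inequality does not ``drop out.'' Indeed the stated bound is false in general: with $H=\{1\}$ every $L_i$ is trivial, so $[L_1,\dots,L_n,K]=1$ for all $K$ and $ssd^{(n)}(\{1\},G)=1$, whereas $ssd^{(1)}(G,G)=ssd(G)<1$ for any nonabelian $G$. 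The paper's own proof contains exactly this slip.

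For the second inequality, the monotonicity you invoke runs the wrong way. If $[L_1,\dots,L_{n+1}]=1$ then $[L_1,\dots,L_{n+1},L_{n+2}]=[1,L_{n+2}]=1$ for every $L_{n+2}$, so each good $(n{+}1)$-tuple yields $|\mathcal{L}(G)|$ good $(n{+}2)$-tuples and hence $ssd^{(n+1)}(G,G)\ge ssd^{(n)}(G,G)$. Your heuristic that ``adding one more subgroup \dots can only shrink the set of subgroups that centralize'' is backwards: lengthening the iterated commutator typically makes it smaller, so \emph{more} $K$ centralize it. The displayed chain $ssd^{(1)}\ge ssd^{(2)}\ge\cdots$ in the paper (and the limit computation preceding it) is itself erroneous, and the containments $Comm_{H^{n-1}\times G}(L_1)\subseteq\cdots\subseteq Comm_G(L_1,\dots,L_n)$ do not yield the claimed inequality after normalization. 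Only the third inequality, $ssd(G)\le sd(G)$ via $Comm_G(H)\subseteq\mathcal{C}(H)$, is both true and correctly argued.
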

\begin{proof}
We begin to prove  the first inequality. Since $\mathcal{L}(H) \subseteq \mathcal{L}(G)$,
\begin{equation}ssd^{(n)}(H,G) \le |\mathcal{L}(H)|^n  \ |\mathcal{L}(G)| \ ssd^{(n)}(H, G) =  {\underset {\underset {Y\in \mathcal{L}(G)} {X_1,\ldots, X_n \in \mathcal{L}(H)}} \sum \varphi_n(X_1,\ldots, X_n, Y)}\end{equation}
\begin{equation}\le \sum_{X_1,\ldots, X_n,Y \in \mathcal{L}(G)} \varphi_n(X_1,\ldots, X_n, Y)
= |\mathcal{L}(G)|^n  \ |\mathcal{L}(G)| \ ssd^{(n)}(G, G). \end{equation}
The second inequality follows once we note that $ssd^{(n)}(H,G)$  is a decreasing sequence. Therefore, if we fix $H=G$, then $ssd(G)=ssd^{(1)}(G,G)\ge ssd^{(2)}(G,G)\ge \ldots \ge ssd^{(n)}(G,G)\ge \ldots $.
The last inequality follows once we note that $Comm_G(H) \subseteq \mathcal{C}(H)$ and that
\begin{equation} ssd(G) = \frac{1}{|\mathcal{L}(G)|^2}  \sum_{H\in \mathcal{L}(G)} |Comm_G(H)| \leq \frac{1}{|\mathcal{L}(G)|^2}\sum_{ H\in \mathcal{L}(G)}|\mathcal{C}(H)| = sd(G). \end{equation}
\end{proof}

\begin{prop}\label{p:3}
For two groups $C$ and $D$ of coprime orders and two subgroups $A\le C$ and $B\le D$,
\[
ssd^{(n)}(A \times B, C \times D) = ssd^{(n)}(A,C)\cdot ssd^{(n)}(B,D).
\]

\end{prop}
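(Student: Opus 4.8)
The plan is to mimic the proof of Proposition \ref{p:2}, the essential new ingredient being a componentwise decomposition of iterated commutators of subgroups in a direct product. First I would record the two lattice identities on which everything rests: since $C$ and $D$ have coprime orders, \cite[Chapter 1]{rs} gives $\mathcal{L}(C \times D) = \mathcal{L}(C) \times \mathcal{L}(D)$; moreover $A \le C$ and $B \le D$ also have coprime orders, so likewise $\mathcal{L}(A \times B) = \mathcal{L}(A) \times \mathcal{L}(B)$. Consequently every subgroup $L_i \in \mathcal{L}(A \times B)$ factors uniquely as $L_i = A_i \times B_i$ with $A_i \in \mathcal{L}(A)$ and $B_i \in \mathcal{L}(B)$, and every subgroup $K \in \mathcal{L}(C \times D)$ factors as $K = C_0 \times D_0$ with $C_0 \in \mathcal{L}(C)$ and $D_0 \in \mathcal{L}(D)$.

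The key step is to prove, by induction on $n$, that
\[
[L_1, \dots, L_n, K] = [A_1, \dots, A_n, C_0] \times [B_1, \dots, B_n, D_0].
\]
For this I would use the elementary fact that in a direct product the commutator of two ``rectangular'' subgroups is again rectangular, namely $[U_1 \times U_2, V_1 \times V_2] = [U_1, V_1] \times [U_2, V_2]$, which follows from the componentwise identity $[(u_1, u_2), (v_1, v_2)] = ([u_1, v_1], [u_2, v_2])$ and the observation that one may force either component of a generator to be trivial by choosing a trivial element there. Applying this repeatedly, the iterated commutator $W = [L_1, \dots, L_n]$ stays rectangular, say $W = W^A \times W^B$ with $W^A = [A_1, \dots, A_n]$ and $W^B = [B_1, \dots, B_n]$, and one final application to $[W, K]$ yields the displayed decomposition. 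In particular $[L_1, \dots, L_n, K] = 1$ holds precisely when both $[A_1, \dots, A_n, C_0] = 1$ in $C$ and $[B_1, \dots, B_n, D_0] = 1$ in $D$, so the indicator factorizes as
\[
\varphi_n(L_1, \dots, L_n, K) = \varphi_n(A_1, \dots, A_n, C_0) \cdot \varphi_n(B_1, \dots, B_n, D_0).
\]

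With this factorization in hand the remaining computation is routine. Summing over all tuples $(L_1, \dots, L_n, K)$, the bijections furnished by the two lattice identities turn the sum into a product of two independent sums, one over $\mathcal{L}(A)^n \times \mathcal{L}(C)$ and one over $\mathcal{L}(B)^n \times \mathcal{L}(D)$; dividing by $|\mathcal{L}(A \times B)|^n \, |\mathcal{L}(C \times D)| = |\mathcal{L}(A)|^n \, |\mathcal{L}(C)| \cdot |\mathcal{L}(B)|^n \, |\mathcal{L}(D)|$ and recognizing the two factors as $ssd^{(n)}(A, C)$ and $ssd^{(n)}(B, D)$ gives the result. I expect the main (though still mild) obstacle to be the inductive commutator computation above: one must check that at each stage the intermediate commutator subgroup remains a genuine direct factor, so that the next bracket splits cleanly, and it is worth emphasizing that the coprimality hypothesis is needed only to secure the two lattice decompositions and plays no role in the commutator identity itself.
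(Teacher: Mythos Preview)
Your proposal is correct and follows essentially the same approach as the paper: both exploit the coprime-order lattice decompositions $\mathcal{L}(A\times B)=\mathcal{L}(A)\times\mathcal{L}(B)$ and $\mathcal{L}(C\times D)=\mathcal{L}(C)\times\mathcal{L}(D)$ together with the componentwise splitting of the iterated commutator to factor the count. If anything, your version is more explicit than the paper's, which simply asserts the factorization $|Comm_{C\times D}(A_1\times B_1,\dots,A_n\times B_n)|=|Comm_C(A_1,\dots,A_n)|\cdot|Comm_D(B_1,\dots,B_n)|$ without spelling out the inductive commutator identity you supply.
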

\begin{proof}
\begin{equation}ssd^{(n)}(A\times B, C\times D)\end{equation}
\[ = \frac{1}{|\mathcal{L}(A\times B)|^n \ |\mathcal{L}(C\times D)|} \underset{A_1\times B_1,\ldots, A_n\times B_n \in \mathcal{L}(A\times B)}{\sum} |Comm_{A \times B}(A_1 \times B_1,\dots, A_n \times B_n)|\]
\[=\frac{1}{|\mathcal{L}(A)|^n \cdot |\mathcal{L}(B)|^n \cdot |\mathcal{L}(C)| \cdot |\mathcal{L}(D)|} \Big(\underset{A_1\times B_1,\ldots, A_n\times B_n \in \mathcal{L}(A\times B)}{\sum} |Comm_A(A_1,\ldots, A_n)|\]
\[ \cdot |Comm_B(B_1,\ldots, B_n)|\Big) =\frac{1}{|\mathcal{L}(A)|^n \cdot |\mathcal{L}(B)|^n \cdot |\mathcal{L}(C)| \cdot \mathcal{L}(D)|}\]
\[= \Big(\underset{A_1,\ldots, An \in \mathcal{L}(A)}{\sum} |Comm_A(A_1,\dots, A_n)|\Big) \cdot \Big(\underset{B_1,\ldots, B_n \in \mathcal{L}(B)}{\sum} |Comm_B(B_1,\ldots, B_n)|\Big)\]
\[=\frac{1}{|\mathcal{L}(A)|^n \ |\mathcal{L}(C)| } \Big(\underset{A_1,\ldots, An \in \mathcal{L}(A)}{\sum} |Comm_A(A_1,\dots, A_n)|\Big)\]
\[ \cdot \frac{1}{ |\mathcal{L}(B)|^n \ |\mathcal{L}(D)|}\Big(\underset{B_1,\ldots, B_n \in \mathcal{L}(B)}{\sum} |Comm_B(B_1,\ldots, B_n)|\Big)\]
\[= ssd^{(n)}(A,C)\cdot ssd^{(n)}(B,D).\]
\end{proof}

We note that Proposition \ref{p:2} follows from Proposition \ref{p:3}, when $n=1$, $A=C=G_1$, $B=D=G_2$.

\begin{cor}\label{c:5}
Proposition \ref{p:3} is still true for finitely many factors.
\end{cor}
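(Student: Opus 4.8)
The plan is to prove Corollary \ref{c:5} by induction on the number of direct factors, using Proposition \ref{p:3} as the base case. The statement to establish is that for finitely many groups $G_1,\ldots,G_r$ of pairwise coprime orders, with chosen subgroups $H_i\le G_i$, we have
\[
ssd^{(n)}(H_1\times\cdots\times H_r,\ G_1\times\cdots\times G_r)=\prod_{i=1}^{r} ssd^{(n)}(H_i,G_i).
\]
First I would dispose of $r=1$ trivially and take $r=2$ as given by Proposition \ref{p:3}. For the inductive step, assume the formula holds for any $r-1$ factors of pairwise coprime orders. I would then group the factors as $(G_1\times\cdots\times G_{r-1})\times G_r$ and similarly $(H_1\times\cdots\times H_{r-1})\times H_r$, so that the two blocks again have coprime orders. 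Applying Proposition \ref{p:3} to this single splitting gives
\[
ssd^{(n)}(H_1\times\cdots\times H_r,\ G_1\times\cdots\times G_r)=ssd^{(n)}(H_1\times\cdots\times H_{r-1},\ G_1\times\cdots\times G_{r-1})\cdot ssd^{(n)}(H_r,G_r),
\]
and the inductive hypothesis rewrites the first factor on the right as $\prod_{i=1}^{r-1} ssd^{(n)}(H_i,G_i)$, completing the step.

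The one point deserving care — and the step I expect to be the main obstacle — is verifying that the hypotheses of Proposition \ref{p:3} genuinely apply to the coarse splitting $(G_1\times\cdots\times G_{r-1})\times G_r$. This rests on two facts that must be checked honestly rather than waved through. The first is the arithmetic point that $\gcd(|G_1\cdots G_{r-1}|,|G_r|)=1$ follows from the pairwise coprimality of the original orders; this is immediate since $|G_1\times\cdots\times G_{r-1}|=\prod_{i<r}|G_i|$ and each $|G_i|$ is coprime to $|G_r|$. The second, more structural, is that the lattice decomposition $\mathcal{L}(G_1\times\cdots\times G_r)=\mathcal{L}(G_1)\times\cdots\times\mathcal{L}(G_r)$ used implicitly in Proposition \ref{p:3} is compatible with the associativity of the direct product, so that regrouping the factors does not alter the subgroup lattice or the commutator-triviality condition $[L_1,\ldots,L_n,K]=1$ being checked coordinatewise. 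Both facts are standard consequences of the coprime-order hypothesis (as noted in \cite[Chapter 1]{rs}), so the obstacle is one of careful bookkeeping rather than genuine difficulty.

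Accordingly, rather than reprove the coordinatewise factorization of $Comm$ from scratch at each level — which would merely duplicate the computation already carried out in Proposition \ref{p:3} — I would phrase the induction so that each step invokes Proposition \ref{p:3} for exactly one two-block splitting, letting the associativity of $\times$ and the coprimality of the block orders do all the work. This is precisely the sense of the remark that one can \emph{mimick the proof of Proposition \ref{p:3}}: the single-split argument is the only analytic content, and the passage to arbitrarily many factors is a formal induction that contributes nothing new beyond confirming that the hypotheses propagate correctly.
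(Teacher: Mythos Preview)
Your proposal is correct and aligns with the paper's own proof, which consists of the single sentence ``We can mimick the proof of Proposition \ref{p:3}.'' Your inductive reduction to the two-factor case is a valid and fully detailed way to carry out exactly that mimicry, and the care you take in checking that the coprimality and lattice-decomposition hypotheses propagate to the grouped factors is appropriate (and arguably more than the paper itself supplies).
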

\begin{proof}
We can mimick the proof of Proposition \ref{p:3}.
\end{proof}

We end with a variation on the theme of Theorems \ref{t:fundamental} and \ref{t:again}.

\begin{thm} 
Let $H$ and $K$ be two subgroups of a group $G$. Then for all $n \ge1$ 
\[ssd^{(n)}(H,H) < \frac{|H|^{n+1}}{|\mathcal{L}(H)|^{n+1}} \ \sum_{K \in \mathcal{L}(H)}  d^{(n)}(K,K).\] 
\end{thm}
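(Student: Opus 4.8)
The plan is to imitate, almost verbatim, the argument of Theorem \ref{t:fundamental}, but working inside $H$ rather than $G$ and first collapsing the iterated commutator to an ordinary one. I would set $M=[L_1,\dots,L_n]$ and use the identity $[L_1,\dots,L_n,K]=[[L_1,\dots,L_n],K]=[M,K]$ recorded earlier, so that $Comm_H(L_1,\dots,L_n)=Comm_H(M)$. Applying the claim established at the start of the proof of Theorem \ref{t:fundamental} (with $H$ in the role of the ambient group and $M$ in the role of the tested subgroup) then gives $Comm_H(M)=\bigcup_{K\in\mathcal{L}(H)}C_K(M)$. Feeding this into the summation formula specialised to $G=H$, namely $|\mathcal{L}(H)|^{n+1}\,ssd^{(n)}(H,H)=\sum_{L_1,\dots,L_n\in\mathcal{L}(H)}|Comm_H(L_1,\dots,L_n)|$, reduces the left-hand side to $\sum_{L_1,\dots,L_n}\big|\bigcup_{K}C_K(M)\big|$.

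Next, exactly as in Theorem \ref{t:fundamental}, I would bound the cardinality of a union by the sum of the cardinalities. Since the identity subgroup lies in every $C_K(M)$, for $H$ nontrivial (the statement is vacuous otherwise) the union is strictly smaller than the sum, which yields the strict inequality
\[|\mathcal{L}(H)|^{n+1}\,ssd^{(n)}(H,H) < \sum_{L_1,\dots,L_n,K\in\mathcal{L}(H)}\big|C_K([L_1,\dots,L_n])\big|.\]
This is the only step where strictness is used, and it is the source of the strict sign in the conclusion.

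The final step passes from subgroup centralizers to element centralizers so as to introduce the multiple commutativity degree. The key observation is that $[l_1,\dots,l_n]\in[L_1,\dots,L_n]=M$ whenever $l_i\in L_i$, which follows by an easy induction on $n$ from the definition of the commutator subgroup; hence $C_K(M)\subseteq C_K([l_1,\dots,l_n])$ and $|C_K(M)|\le|C_K([l_1,\dots,l_n])|$ for every such tuple (the analogue of $C_K(H)\subseteq C_K(h)$). Grouping the defining count of $d^{(n)}(K,K)$ by its last coordinate gives the identity $\sum_{(k_1,\dots,k_n)\in K^n}|C_K([k_1,\dots,k_n])|=|K|^{n+1}\,d^{(n)}(K,K)$, and the crude estimates $|L_i|,|K|\le|H|$ are what absorb the volume factors into the constant $|H|^{n+1}$, exactly as $|H|,|K|\le|G|$ produce the factor $|G|^2$ at the end of Theorem \ref{t:fundamental}. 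Assembling these should deliver the asserted bound $\frac{|H|^{n+1}}{|\mathcal{L}(H)|^{n+1}}\sum_{K\in\mathcal{L}(H)}d^{(n)}(K,K)$.

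I expect this last step to be the main obstacle, and the delicate point is the bookkeeping that turns the subgroup-indexed sum $\sum_{L_1,\dots,L_n,K}|C_K(M)|$ into the element-level quantity $\sum_{K}d^{(n)}(K,K)$: the tuples $(l_1,\dots,l_n)$ furnished by the inclusion above range over $L_1\times\dots\times L_n$ rather than over $K^n$, so making the substitution clean requires carefully controlling the multiplicities with which each element tuple is counted and dumping the surplus into the factor $|H|^{n+1}$ via the coarse bounds on $|L_i|$ and $|K|$. It is worth verifying the scheme at $n=1$, where it degenerates precisely to the proof of Theorem \ref{t:fundamental}; this serves both as a consistency check and as confirmation that, as the author notes, the result is genuinely a \emph{variation on the same theme}.
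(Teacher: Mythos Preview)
Your plan is the paper's plan almost verbatim: the paper, too, ``overlaps the argument'' of Theorem~\ref{t:fundamental}, writes $Comm_{H^n}(L_1)$ as a union of centralizer-type sets, replaces the union by a (strictly larger) sum, and then passes from $C_{H^n}(L_1)$ to $C_{H^n}(l_1)$ for $l_1\in L_1$ before declaring the result equal to $\sum_{K\in\mathcal{L}(H)}d^{(n)}(K,K)\,|K|^{n+1}$ in a single unjustified line. The only cosmetic difference is that the paper fixes the first coordinate $L_1$ and lets the tail $(L_2,\dots,L_{n+1})$ vary, whereas you collapse the head $[L_1,\dots,L_n]=M$ and let the last coordinate $K$ vary; these are symmetric rearrangements of the same count.

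The obstacle you flag in your last paragraph is genuine, and the paper does not resolve it either. In fact your proposed sanity check at $n=1$ exposes rather than confirms the scheme: running your steps with $n=1$ literally reproduces the proof of Theorem~\ref{t:fundamental}, whose output is
\[
ssd(H)<\frac{|H|^{2}}{|\mathcal{L}(H)|^{2}}\sum_{L,K\in\mathcal{L}(H)}d(L,K),
\]
a \emph{double} sum, whereas the $n=1$ case of the present statement asserts the smaller right-hand side $\frac{|H|^{2}}{|\mathcal{L}(H)|^{2}}\sum_{K\in\mathcal{L}(H)}d(K)$, a \emph{single} sum. The discrepancy is exactly the indexing problem you describe: the element tuples $(l_1,\dots,l_n)$ produced by the inclusion $C_K(M)\subseteq C_K([l_1,\dots,l_n])$ live in $L_1\times\cdots\times L_n$, not in $K^n$, and the crude bounds $|L_i|\le|H|$ cannot transport them there. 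The paper's jump from $\sum_{(L_1,\dots,L_{n+1})\in\mathcal{L}(H)^{n+1}}\sum_{l_1\in L_1}|C_{H^n}(l_1)|$ to $\sum_{K\in\mathcal{L}(H)}d^{(n)}(K,K)\,|K|^{n+1}$ performs the same unexplained collapse of $n+1$ subgroup indices down to one, with the additional handicap that the symbol $C_{H^n}(\cdot)$ is never defined. So your approach is the paper's approach, and your diagnosis of its weak point is more candid than the paper's own treatment.
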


\begin{proof} 
Overlapping the argument in the proof of Theorem \ref{t:fundamental},we firstly prove that \begin{equation}\bigcup_{(L_2,\ldots, L_n, L_{n+1})\in \mathcal{L}(H)^n}C_{H^n}(L_1)=Comm_{H^n}(L_1),\end{equation} 
where \begin{equation}
Comm_{H^n}(L_1)=Comm_{H^{n-1}\times H}(L_1)\end{equation}
\[ = \{(L_2,L_3,\ldots, L_n, L_{n+1}) \in \mathcal{L}(H)^{n-1} \times \mathcal{L}(H) \ |   \  [\ldots [L_1,L_2],\ldots,  L_n],L_{n+1}] = 1\}\]
and then 
\begin{equation}|\mathcal{L}(H)|^{n+1} \ ssd^{(n)}(H,H)=  \sum_{L_1\in \mathcal{L}(H)} |Comm_{H^n}(L_1)|\end{equation}		
\[=\sum_{L_1 \in \mathcal{L}(H)} \left| \bigcup_{(L_2,\ldots , L_n, L_{n+1})\in \mathcal{L}(H)^n} C_{H^n}(L_1)\right| \]
\[< \sum_{(L_2,\ldots, L_n, L_{n+1})\in \mathcal{L}(H)^n} \sum_{L_1 \in \mathcal{L}(H)^n} |C_{H^n}(L_1)|\]
and we note that the equality must be strict for the same motivation of the corresponding step in the proof of Theorem \ref{t:fundamental}. Since $C_{H^n}(L_1) \subseteq C_{H^n}(l_1)$ whenever $l_1 \in L_1$, we may continue, finding that
\begin{equation}\label{tech}
\le \sum_{(L_2,\ldots, L_n, L_{n+1}) \in \mathcal{L}(H)^n} {\underset{L_1 \in \mathcal{L}(H)} {\underset{l_1 \in L_1}\sum}} |C_{H^n} (l_1)|\end{equation}
\[ = \sum_{(L_1,L_2,\ldots, L_n,L_{n+1}) \in \mathcal{L}(H)^{n+1}} \Big( \sum_{l_1 \in L_1}|C_{H^n}(l_1)|\Big) \]
\[=\sum_{K \in \mathcal{L}(H)} d^{(n)}(K,K) \ |K|^{n+1} \le |H|^{n+1} \ \sum_{K \in \mathcal{L}(H)} d^{(n)}(K,K).\]
\end{proof}

Roughly speaking, in the proof of Theorem \ref{t:1} we may replace the role  of $\varphi=\varphi_2$ with that of $\varphi_n$ for $n>2$. We will find the following generalization of Theorem \ref{t:1}, whose proof is easy to check and so it is omitted.

\begin{thm}\label{t:2}  Let $H$ be a subgroup of a group $G$. Then for all $n\ge1$
\[\frac{|\mathcal{L}(H)|^{n+1}}{|\mathcal{L}(G)|^{n+1}} \ ssd^{(n)}(H)\leq   ssd^{(n)}(G).\]
\end{thm}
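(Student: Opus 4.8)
The plan is to mimic almost verbatim the proof of the first inequality in Theorem \ref{t:1}, replacing the binary permutability indicator $\varphi=\varphi_2$ by its multi-variable analogue $\varphi_n$. The key observation is that the value $ssd^{(n)}(H)=ssd^{(n)}(H,H)$ is obtained by summing $\varphi_n(X_1,\ldots,X_n,X_{n+1})$ over all $(n+1)$-tuples of subgroups drawn from $\mathcal{L}(H)$, while $ssd^{(n)}(G)$ is obtained by the same sum over tuples drawn from $\mathcal{L}(G)$. So the whole argument reduces to an inclusion of index sets.

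First I would record the defining identity
\[|\mathcal{L}(H)|^{n+1}\ ssd^{(n)}(H,H)=\sum_{X_1,\ldots,X_{n+1}\in\mathcal{L}(H)}\varphi_n(X_1,\ldots,X_n,X_{n+1}),\]
which follows by setting $H=G$ in the reformulation of $ssd^{(n)}(H,G)$ via $\varphi_n$ given just before Proposition \ref{boundingsdg}. Next I would invoke the containment $\mathcal{L}(H)\subseteq\mathcal{L}(G)$ (valid since every subgroup of $H$ is a subgroup of $G$): the index set $\mathcal{L}(H)^{n+1}$ is a subset of $\mathcal{L}(G)^{n+1}$, and since $\varphi_n$ takes only nonnegative values $\{0,1\}$, enlarging the range of summation can only increase the sum. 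Thus
\[\sum_{X_1,\ldots,X_{n+1}\in\mathcal{L}(H)}\varphi_n(X_1,\ldots,X_n,X_{n+1})\le\sum_{X_1,\ldots,X_{n+1}\in\mathcal{L}(G)}\varphi_n(X_1,\ldots,X_n,X_{n+1})=|\mathcal{L}(G)|^{n+1}\ ssd^{(n)}(G,G).\]
Dividing both ends by $|\mathcal{L}(G)|^{n+1}$ yields exactly the claimed inequality.

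One small point that must be checked, rather than the main obstacle, is that $\varphi_n$ is well defined on tuples from $\mathcal{L}(H)$ in a way compatible with its definition on tuples from $\mathcal{L}(G)$; this is immediate because the iterated commutator $[X_1,\ldots,X_n,Y]$ and the condition for its triviality depend only on the subgroups themselves and not on the ambient group, so the value of $\varphi_n$ on a tuple of subgroups of $H$ agrees whether we regard them inside $H$ or inside $G$. I do not expect any genuine obstacle here: the result is the monotonicity of a sum of a $\{0,1\}$-valued function under enlargement of its domain, and the only substantive input is the set-theoretic containment $\mathcal{L}(H)\subseteq\mathcal{L}(G)$. This is precisely why the authors remark that the proof is easy and omit it; the content is entirely contained in the $n=2$ case already proved in Theorem \ref{t:1}.
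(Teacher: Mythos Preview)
Your proposal is correct and follows precisely the approach the paper indicates: it omits the proof but states that one should replace $\varphi$ by $\varphi_n$ in the argument for the first inequality of Theorem~\ref{t:1}, which is exactly the monotonicity-under-enlargement-of-index-set argument you wrote out. The additional remark about $\varphi_n$ being well defined independently of the ambient group is a nice touch but, as you note, entirely routine.
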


We note that a similar treatment can be done for the relative subgroup commutativity degree in \cite{mtbis}, since the arguments involve only combinatorial properties and set theory. This fact motivates to conjecture that the context of infinite compact groups, once a suitable Haar measure is replaced with $ssd(G)$ or with $sd(G)$, may be subject to an analogous treatment.

\section{Two applications} 
Here we illustrate an application to the theory of characters and another to the dihedral
groups. Relations with the theory of characters are due to the fact that in a group $G$ 
\begin{equation}d(G)=\frac{|\mathrm{Irr}(G)|}{|G|},\end{equation}
where $\mathrm{Irr}(G)$ denotes the set of all irreducible complex characters of $G$. For  an element $g$ of $G$, let  
\begin{equation}\xi (g)=| (X, Y) \in \mathcal{L}(\langle g \rangle)\times \mathcal{L}(G) \ | \ [X, Y] = 1\}|.\end{equation} Thus,
\begin{equation}
ssd(\langle g \rangle, G) = \frac{\xi (g)}{|\mathcal{L}(\langle g \rangle)||\mathcal{L}(G)|}.
\end{equation}

\begin{lem}\label{cf}
$\xi(g)$ is a class function.
\end{lem}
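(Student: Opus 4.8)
The plan is to show that $\xi$ is constant on conjugacy classes of $G$, i.e. $\xi(g) = \xi(g^a)$ for every $a \in G$. The key observation is that conjugation by $a$, namely the map $c_a : x \mapsto a^{-1}xa$, is an automorphism of $G$, and automorphisms act on the subgroup lattice $\mathcal{L}(G)$ in a way that respects all the structure appearing in the definition of $\xi(g)$. Concretely, I would first record that $c_a$ induces a bijection $\mathcal{L}(G) \to \mathcal{L}(G)$ sending $Y \mapsto Y^a = a^{-1}Ya$, and that this bijection preserves commutators in the sense that $[X^a, Y^a] = [X,Y]^a$, so that $[X,Y] = 1$ if and only if $[X^a, Y^a] = 1$.

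Next I would handle the cyclic factor. Since $\langle g \rangle^a = \langle g^a \rangle$, the automorphism $c_a$ restricts to a bijection $\mathcal{L}(\langle g \rangle) \to \mathcal{L}(\langle g^a \rangle)$, because the subgroups of a cyclic group are permuted among the subgroups of its conjugate. Thus the assignment $(X,Y) \mapsto (X^a, Y^a)$ defines a map
\[
\mathcal{L}(\langle g \rangle) \times \mathcal{L}(G) \longrightarrow \mathcal{L}(\langle g^a \rangle) \times \mathcal{L}(G),
\]
and its inverse is given by conjugation by $a^{-1}$, so it is a bijection. By the commutator-preservation property this bijection carries the set $\{(X,Y) : [X,Y]=1\}$ counted by $\xi(g)$ onto the analogous set counted by $\xi(g^a)$.

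Putting these pieces together, the bijection identifies the two defining sets term by term, whence $\xi(g) = \xi(g^a)$. Since $a \in G$ was arbitrary, $\xi$ takes a constant value on each conjugacy class, which is exactly the assertion that $\xi$ is a class function. I do not expect any serious obstacle here; the only point requiring a little care is verifying that conjugation genuinely preserves the relation $[X,Y]=1$ at the level of subgroups (not merely of elements), but this is immediate from $[a^{-1}xa, a^{-1}ya] = a^{-1}[x,y]a$ applied to generators, together with the fact that $c_a$ is a homomorphism and hence maps $\langle\,[h,k]\,\rangle$ to $\langle\,[h^a,k^a]\,\rangle$.
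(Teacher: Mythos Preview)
Your proposal is correct and follows essentially the same approach as the paper: both arguments establish that conjugation by a fixed element $a$ induces a bijection $(X,Y)\mapsto (X^a,Y^a)$ between the defining set for $\xi(g)$ and that for $\xi(g^a)$, using that $\langle g\rangle^a=\langle g^a\rangle$ and that $[X,Y]=1$ iff $[X^a,Y^a]=1$. The paper simply asserts this one-to-one correspondence without spelling out the commutator-preservation step you carefully justified.
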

\begin{proof}
It is enough to note that, for each $a \in G$, the map \begin{equation}f : (X, Y) \mapsto f(X,Y)=(aXa^{-1}, aYa^{-1})\end{equation} defines a one to one correspondence between the sets $\{(X, Y) \in \mathcal{L}(\langle g \rangle) \times \mathcal{L}(G)  \ | \ [X, Y] = 1\}$
and $\{(X, Y) \in \mathcal{L}(\langle aga^{-1} \rangle) \times \mathcal{L}(G)  \ | \ [X, Y] = 1\}$.
\end{proof}
Thus, it is meaningful to write
\begin{equation}
\xi(g) = \underset{\chi \in \mathrm{Irr}(G)}{\sum} [\xi, \chi] \chi(g)
\end{equation}
where $[\,,\, ]$ denotes the usual inner product of characters, defined by
\begin{equation}
[\xi, \chi] = \dfrac{1}{|G|}\sum_{g\in G}\xi(g)\overline{\chi(g)}=\dfrac{1}{|G|}\sum_{g\in G}\xi(g)\chi(g^{-1}).
\end{equation}

We recall that a class function defined on a finite group $G$ is said to be an $R$--\textit{generalized character of} $G$, for any ring $\mathbb{Z} \subseteq R \subseteq \mathbb{C}$, if it is an $R$--linear combination of irreducible complex characters of $G$. 

\begin{thm} \label{genchar}
$\xi$ is a $\mathbb{Q}$-generalized character of $G$. 
\end{thm}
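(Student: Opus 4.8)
The plan is to show that $\xi$ is a $\mathbb{Q}$-generalized character by proving that all its inner products $[\xi,\chi]$ with irreducible characters $\chi$ are rational numbers, and then invoking a standard rationality criterion to promote this to membership in the ring of $\mathbb{Q}$-generalized characters. First I would record the two facts already established: by Lemma \ref{cf} the function $\xi$ is a class function, so it does decompose as $\xi=\sum_{\chi\in\mathrm{Irr}(G)}[\xi,\chi]\,\chi$ with complex coefficients, and what must be shown is precisely that each coefficient lies in $\mathbb{Q}$.

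The key observation is that $\xi$ takes only nonnegative integer values (it counts pairs of subgroups), and moreover it is constant on Galois-conjugate elements. Recall that for a generator $\sigma$ of $\mathrm{Gal}(\mathbb{Q}(\zeta_{|G|})/\mathbb{Q})$, the element $g$ and its power $g^{t}$ (with $t$ coprime to the order of $g$, corresponding to $\sigma$) generate the same cyclic subgroup, so $\mathcal{L}(\langle g\rangle)=\mathcal{L}(\langle g^{t}\rangle)$. Hence the defining set in $\xi(g)$ depends only on $\langle g\rangle$, which gives $\xi(g)=\xi(g^{t})$ for every $t$ coprime to $\mathrm{ord}(g)$. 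Thus $\xi$ is invariant under every element of the Galois group acting on conjugacy classes.

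With this in hand, I would apply the Galois action to the inner product formula. For $\sigma\in\mathrm{Gal}(\mathbb{Q}(\zeta_{|G|})/\mathbb{Q})$ one has $\sigma([\xi,\chi])=[\xi^{\sigma},\chi^{\sigma}]$, where $\chi^{\sigma}$ is again an irreducible character and $\xi^{\sigma}$ is obtained by letting $\sigma$ act on the values of $\xi$. Since $\xi$ is integer-valued and constant on Galois-conjugate classes, $\xi^{\sigma}=\xi$, so $\sigma([\xi,\chi])=[\xi,\chi^{\sigma}]$. Summing over $\chi$, or rather noting that $\xi=\xi^{\sigma}$ forces the coefficient vector to be permuted among Galois-conjugate irreducibles in a way consistent with $\sigma$-invariance of $\xi$, one concludes that each $[\xi,\chi]$ is fixed by every $\sigma$, hence lies in $\mathbb{Q}$. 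The coefficients are also algebraic integers (being inner products of a virtual character with rational integer values against an irreducible character, they lie in the ring of integers of a cyclotomic field), so being rational they are in fact integers; in any case rationality is all that is claimed.

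The main obstacle is making the Galois-invariance argument fully rigorous: one must carefully justify that $\xi$ is constant on the classes of $g$ and all its coprime powers, which rests on the identity $\mathcal{L}(\langle g\rangle)=\mathcal{L}(\langle g^{t}\rangle)$ together with Lemma \ref{cf} to pass from elements to conjugacy classes, and then correctly track the Galois action through the inner product. Once the $\sigma$-invariance of each coefficient is secured, the conclusion that $\xi$ is a $\mathbb{Q}$-generalized character is immediate from the definition recalled just before the statement.
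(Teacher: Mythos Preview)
Your approach is essentially the paper's: observe that $\xi(g)$ depends only on the cyclic group $\langle g\rangle$, hence $\xi(g)=\xi(g^{t})$ whenever $(t,\mathrm{ord}(g))=1$, and combine this with the Galois relation $\chi(g)^{\sigma}=\chi(g^{m})$ to show each inner product $[\xi,\chi]$ is fixed by $\mathrm{Gal}(\mathbb{Q}(\zeta_{|G|})/\mathbb{Q})$.

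There is, however, one step that does not go through as written. From the integrality of the values of $\xi$ you correctly obtain $\sigma([\xi,\chi])=[\xi,\chi^{\sigma}]$, but this alone does \emph{not} force $[\xi,\chi]$ to be $\sigma$-invariant; it only says the coefficients are permuted compatibly with $\chi\mapsto\chi^{\sigma}$. (For instance, a rational-valued class function on $\mathbb{Z}/5\mathbb{Z}$ supported at a single nontrivial element satisfies this permutation relation yet has irrational Fourier coefficients.) What is missing is precisely the use of the property you established but then set aside: in the sum
\[
[\xi,\chi^{\sigma}]=\frac{1}{|G|}\sum_{g\in G}\xi(g)\,\overline{\chi(g^{m})}
\]
reindex by $h=g^{m}$ (a bijection on $G$ since $(m,|G|)=1$) and invoke $\xi(h^{m'})=\xi(h)$ to obtain $[\xi,\chi^{\sigma}]=[\xi,\chi]$, whence $\sigma([\xi,\chi])=[\xi,\chi]$. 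This substitution is exactly how the paper closes the argument. Once that line is inserted, your proof is complete and coincides with the paper's.
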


\begin{proof}
Clearly, if two elements $x$ and $y$ of $G$ generate the same cyclic group then $\xi(x) = \xi(y)$. Suppose that $o(x)= o(y) = n$. Let $\varepsilon$ be a primitive $n$th root of unity. We have $y = x^m$ 
for some $m$ with $(m, n)=1$ and thus $\varepsilon^m$ is a primitive $n$th root of unity. As usual, $ \mathrm{Gal}(\mathbb{Q}[\varepsilon]/\mathbb{Q})$ denotes the Galois group, related to the algebraic extension $\mathbb{Q}[\varepsilon]$ over $\mathbb{Q}$, obtained adding $\varepsilon$.   Therefore, for any $\sigma \in \mathrm{Gal}(\mathbb{Q}[\varepsilon]/\mathbb{Q})$ we have \begin{equation}\chi(x)^{\sigma} = \sum {\epsilon_i}^\sigma = \sum {\epsilon_i}^m  =  \chi(x^m).\end{equation}   Thus for any $\chi\in \mathrm{Irr}(G)$ and $g\in G$, 
\begin{equation}
\chi(g)^{\sigma} =    \chi(g^m)
\end{equation}
and hence $\left(\delta(g)\chi(g^{-1})\right)^{\sigma}=\delta(g^m)\chi(g^{-m})$. Hence $\sigma$ fixes $\sum_{g\in G}\delta(g)\chi(g^{-1})$ and this completes the proof.
\end{proof}

\begin{cor}
$|G|[\xi, \chi]$ is an integer for all $\chi\in \mathrm{Irr}(G)$.
\end{cor}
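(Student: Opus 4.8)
The plan is to deduce the corollary directly from Theorem \ref{genchar} together with the definition of the inner product. By Theorem \ref{genchar}, $\xi$ is a $\mathbb{Q}$-generalized character, so there are rational coefficients $a_\chi = [\xi,\chi] \in \mathbb{Q}$ with $\xi = \sum_{\chi \in \mathrm{Irr}(G)} a_\chi \chi$. The quantity $|G|[\xi,\chi]$ is exactly $\sum_{g \in G} \xi(g)\chi(g^{-1})$, by the formula for $[\,,\,]$ given just before Lemma \ref{cf}, so the task is to show this sum is an integer.

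First I would observe that $\xi(g)$ is a nonnegative integer for every $g$, since it counts pairs of subgroups, and that each character value $\chi(g^{-1})$ is an algebraic integer (a sum of roots of unity). Hence $\sum_{g \in G} \xi(g)\chi(g^{-1})$ is an algebraic integer. On the other hand, this same sum equals $|G|[\xi,\chi] = |G| a_\chi$, which is rational because $a_\chi \in \mathbb{Q}$ by Theorem \ref{genchar}. A number that is simultaneously an algebraic integer and a rational number must be a rational integer, which is precisely the claim.

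The one step requiring the slightly-less-routine input is the rationality of $a_\chi$, but that is exactly the content of Theorem \ref{genchar}, which I may assume; the Galois-invariance argument there shows $\sum_{g} \xi(g)\chi(g^{-1})$ is fixed by every $\sigma \in \mathrm{Gal}(\mathbb{Q}[\varepsilon]/\mathbb{Q})$, hence lies in $\mathbb{Q}$. Thus the proof is essentially the standard ``rational algebraic integer is a rational integer'' principle, and I would expect no genuine obstacle. The proof sketch is therefore:

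\begin{proof}
By the definition of the inner product,
\[
|G|[\xi,\chi] = \sum_{g \in G} \xi(g)\,\chi(g^{-1}).
\]
Each $\xi(g)$ is a nonnegative integer, being the cardinality of a set of pairs of subgroups, and each $\chi(g^{-1})$ is an algebraic integer, being a sum of roots of unity; hence the displayed sum is an algebraic integer. By Theorem \ref{genchar}, $\xi$ is a $\mathbb{Q}$-generalized character, so $[\xi,\chi] \in \mathbb{Q}$ and therefore $|G|[\xi,\chi] \in \mathbb{Q}$. An algebraic integer that is rational is a rational integer, whence $|G|[\xi,\chi] \in \mathbb{Z}$.
\end{proof}
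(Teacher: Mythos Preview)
Your proof is correct and follows essentially the same approach as the paper: use that each $\xi(g)$ is an integer and each $\chi(g^{-1})$ an algebraic integer to see that $|G|[\xi,\chi]=\sum_{g}\xi(g)\chi(g^{-1})$ is an algebraic integer, then invoke Theorem~\ref{genchar} for rationality and conclude via the ``rational algebraic integer is an integer'' principle. The paper's proof is just a one-line compression of exactly this argument.
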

\begin{proof}
Since $\chi(g)$ is an algebraic integer the result follows from Lemma  \ref{cf} and Theorem \ref{genchar}.   
\end{proof}

For the second application, the dihedral group
\begin{equation}D_{2n} = \langle x, y \ | \ x^2 = y^n = 1, x^{-1}yx=y^{-1} \rangle \end{equation}
of symmetries of a regular polygon with $n \ge 1$ edges has order $2n$ and a well--known de-
scription of $|\mathcal{L}(D_{2n})|$ can be found in \cite{rs,mt,mtbis}. For instance, it is easy to see that $D_{2n} \simeq C_2\ltimes C_n$ is the semidirect product of a cyclic group $C_2$ of order 2 acting by inversion on a cyclic group $C_n$ of order $n$. For every divisor $r$ of $n$, $D_{2n}$ has a subgroup isomorphic to $C_r$ , namely $\langle x^{\frac{n}{r}}\rangle$, and $\frac{n}{r}$ subgroups isomorphic to $D_{2r}$, namely
$\langle x^\frac{n}{r} , x^{i-1} , y\rangle$ for $i = 1, 2, \ldots, \frac{n}{r}$. Then
\begin{equation}
|\mathcal{L}(D_{2n})| = \sigma(n) + \tau (n),\end{equation}
where $\sigma(n)$ and $\tau (n)$ are the sum and the number of all divisors of $n$, respectively. The next result generalizes the above considerations, when we have a group with a structure very close to that of $D_{2n}$.

\begin{cor}\label{appl}Assume that $G$ is a metabelian group of even order. If $|\mathcal{L}(G)| = \sigma(\frac{|G|}{2}) + \tau (\frac{|G|}{2})$ and $G'$ is cyclic, then
\[\frac{(\tau(G')+1)^2}{ \Big(\sigma\Big(\frac{|G|}{2}\Big) + \tau \Big(\frac{|G|}{2}\Big)\Big)^2} \le \sum_{H,K \in \mathcal{L}(G)}\varphi(H,K) \le \frac{|G|^2}{ \Big(\sigma\Big(\frac{|G|}{2}\Big) + \tau \Big(\frac{|G|}{2}\Big)\Big)^2} \sum_{H,K \in \mathcal{L}(G)}d(H,K).\]
\end{cor}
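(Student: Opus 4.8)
The plan is to read this corollary as a specialization of the two inequalities already proved, namely the metabelian lower bound of Corollary \ref{c:3} and the fundamental upper bound of Theorem \ref{t:fundamental}, after translating the relevant lattice cardinalities into arithmetic terms. The central quantity should be read as $ssd(G)$: since $ssd(G)=\frac{1}{|\mathcal{L}(G)|^2}\sum_{H,K\in\mathcal{L}(G)}\varphi(H,K)$, both desired bounds are really statements about $ssd(G)$, and no fresh permutability computation is required. One may assume $G$ is nonabelian, the abelian case being immediate from Proposition \ref{trivial} since then $ssd(G)=1$ dominates any of the displayed fractions.

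For the lower bound I would invoke that $G$ is metabelian, hence nonabelian solvable with $G'$ abelian, so the ``in particular'' part of Corollary \ref{c:3} applies and gives
\[
ssd(G)\ge \frac{1}{|\mathcal{L}(G)|^2}\Big(|\mathcal{L}(G')|^2+2|\mathcal{L}(G')|+1\Big)=\frac{(|\mathcal{L}(G')|+1)^2}{|\mathcal{L}(G)|^2}.
\]
The key arithmetic input is the hypothesis that $G'$ is cyclic: a cyclic group of order $m$ has exactly one subgroup of each divisor order, whence $|\mathcal{L}(G')|=\tau(|G'|)$, written $\tau(G')$ in the paper's shorthand. Substituting this together with the hypothesis $|\mathcal{L}(G)|=\sigma(\frac{|G|}{2})+\tau(\frac{|G|}{2})$ into the denominator produces exactly the left-hand fraction.

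For the upper bound I would apply Theorem \ref{t:fundamental} verbatim,
\[
ssd(G)<\frac{|G|^2}{|\mathcal{L}(G)|^2}\sum_{H,K\in\mathcal{L}(G)}d(H,K),
\]
and again replace $|\mathcal{L}(G)|$ by $\sigma(\frac{|G|}{2})+\tau(\frac{|G|}{2})$ using the hypothesis. Because a strict inequality entails the corresponding non-strict one, this yields the right-hand bound with ``$\le$'' as stated, and chaining the two displays gives the asserted inequality.

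Thus the only genuinely new ingredient is the count $|\mathcal{L}(G')|=\tau(G')$ for cyclic $G'$, and I expect the main (and rather minor) obstacle to be bookkeeping: verifying that the hypotheses of the two cited results are met (metabelianity and nonabelianity for Corollary \ref{c:3}, arbitrary $H,K$ for Theorem \ref{t:fundamental}) and reconciling the normalization of the central term, which must be interpreted as $ssd(G)$ rather than the bare sum $\sum_{H,K}\varphi(H,K)$, so that the two previously established bounds slot together into the displayed chain.
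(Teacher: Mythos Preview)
Your proposal is correct and follows essentially the same route as the paper: the lower bound is Corollary~\ref{c:3} specialized via $|\mathcal{L}(G')|=\tau(G')$ for cyclic $G'$, the upper bound is Theorem~\ref{t:fundamental}, and both are rewritten using the hypothesis $|\mathcal{L}(G)|=\sigma(\tfrac{|G|}{2})+\tau(\tfrac{|G|}{2})$. Your observation that the middle term must be read as $ssd(G)$ rather than the unnormalized sum $\sum_{H,K}\varphi(H,K)$ is well taken; the paper's displayed formula has this normalization slip, and your interpretation is the one that makes the chain of inequalities match the cited results.
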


\begin{proof}
Since $G'$ is cyclic, $|\mathcal{L}(G')| = \tau(G')$. Then the lower bound follows from Corollary \ref{c:3}, specifying the numerical values of the subgroup lattices. From Theorem \ref{t:fundamental}, we get  the upper bound, adapted to our case. The result follows.
\end{proof}

Corollary \ref{appl} is a counting formula for the number of permuting subgroups via $\varphi$, or, equivalently, via the strong subgroup commutativity degree and the commutativity degree. This observation is important in virtue of the fact that we know explicitly $d(H,K)$ by results
in \cite{ar, das-nath1, das-nath2, elr, err, lescot1, lescot2}.


\end{document}